\DeclarePairedDelimiter{\ceil}{\lceil}{\rceil}
\DeclarePairedDelimiter{\floor}{\lfloor}{\rfloor}
\DeclareMathOperator{\Extremal}{ex}
\newtheorem{theorem}{Theorem}
\newtheorem{lemma}{Lemma}
\newtheorem{fact}{Fact}
\author{Ervin Gy\H{o}ri \affiliationmark{1}
\and Runze Wang \affiliationmark{2} 
\and Spencer Woolfson \affiliationmark{3}}
\title{Extremal problems of double stars}
\affiliation{
Alfr\'ed R\'enyi Institute of Mathematics, Budapest, Hungary \\ 
Swarthmore College, Swarthmore, PA, USA \\ 
Hamilton College, Clinton, NY, USA 
}
\keywords{Mathematics - Combinatorics, 05C35}
\begin{document}
\maketitle
\begin{abstract}
In a generalized Tur\'an problem, two graphs $H$ and $F$ are given and the question is the maximum
number of copies of $H$ in an $F$-free graph of order $n$.
In this paper, we study the number of double stars $S_{k,l}$ in triangle-free graphs. We also study an  opposite version of this question: what is the maximum number of edges and triangles in graphs with double star type restrictions, which leads us to study two questions related to the extremal number of triangles or edges in graphs with degree-sum constraints over adjacent or non-adjacent vertices.

\end{abstract}
\section{Introduction}
In the last decade, generalizations of the extremal function $\Extremal(n,H)$ received more and more attention.  For graphs $G$ and $H$, let $n(H,G)$ denote the number of subgraphs of $G$ isomorphic to~$H$ (referred to as \emph{copies} of $H$). In particular, let $t_3(G) = n(K_3,G)$ denote the number of copies of triangles in $G$.
The first result of this type is due to ~\cite{zykov}  (and also independently by ~\cite{erdos}), who determined $\Extremal(n,K_s,K_t)$ exactly for all $s$ and $t$. Then Erd\H os raised the longstanding conjecture $\Extremal(n,C_5,C_3) = (\frac{n}{5})^5$ (where the lower bound is obtained from the uniformly blown up $C_5$). This conjecture was finally verified quarter of a century later by ~\cite{HHKNR} and independently by ~\cite{Grzesik}. On the other hand, the extremal function $\Extremal(n,C_3,C_5)$ was considered by ~\cite{BGy}. Their results were subsequently improved by ~\cite{AS},~\cite{C5C3}, and~\cite{C5C3v2}, but the problem of determining the correct asymptotics is still open. The problem of maximizing $P_\ell$ copies in a $P_k$-free graph was investigated in~\cite{plpk}.

We begin by considering the following question of this type: among all triangle-free graphs with $n$ vertices, what is the maximum number of ``double stars" $S_{k,l}$ where $S_{k,l}$ denotes a star with degree $k$ and a star with degree $l$ connected by an edge. For example $S_{1,1}$ would be $P_4$, the path with 4 vertices.

Then we study a kind of ``converse" of this double star problem: suppose our forbidden graph is $S_{k,l}$ and what we want to maximize is the number of triangles in the graph, how will the graph look like? We know when our forbidden graph is all double stars $S_{k,l}$ where $k+l = d$, we essentially ask our graph $G$ to satisfy $d_G(x) + d_G(y) < d+2$ for all $xy \in E(G)$.

This leads us to two other related problems. The first problem is to find the \emph{minimum} number of triangles in a graph such that $d_G(x) + d_G(y) \geq n + k$ for all  $xy \in E(G)$ and a constant $k$, provided the graph has no isolated vertex. We know that a complete bipartite graph with $n$ vertices will satisfy $d_G(x) + d_G(y) = n$ for all $xy \in E(G)$, and this is why we need the $+k$ here. Another related problem is to find the minimum number of triangles in a graph such that $d_G(x) + d_G(y) \geq n + 1$ for all $xy \notin E(G), x \neq y$. This condition means that the complement of the graph does not contain any ``big" double stars. This condition relates to Ore's theorem, which claims any graph such that $d_G(x) + d_G(y) \geq n$ for all $xy \notin E(G), x \neq y$ is Hamiltonian.

\section{Double Stars}
The \emph{star} $S_{1,a}$ is defined to be $K_{1,a}$, the complete bipartite graph with parts of size 1 and $a$. The vertex belonging to the part of size 1 is called the \emph{central vertex}, and the \emph{degree} of the star is defined to be $a$. The double star $S_{a,b}$ is the graph formed by connecting the central vertices of stars $S_{a}$ and $S_{b}$ with an edge. For example, $S_{1,1}$ is a path with 4 vertices. We will call the edge connecting the two centers the \emph{central edge} of the double star $S_{a,b}$. We are going to show that if $G$ is triangle-free, then a (not necessarily balanced) complete bipartite graph will maximize the number of double stars $S_{a,b}$ in $G$.

For a triangle-free graph $G$, we can compute the number of double stars $S_{a,b}$ ($a \neq b$) whose central edge is a given edge $uv \in E(G)$. There are two ways to have a double star with central edge $uv$, assuming that $a \neq b$: either $u$ is the central vertex of $S_a$ or $v$ is the central vertex of $S_a$. If $u$ is the central vertex of $S_a$, we may simply choose the $a$ vertices forming a star $S_a$ at $u$ from neighbors of $u$ other than $v$ and choose the $b$ vertices forming a $S_b$ at $v$ from neighbors of $v$ other than $u$. Since $G$ is triangle free, $u$ and $v$ will not share a common neighbor. As a result, we can see that there are 
\begin{math}
    \binom{d_G(u) -1}{a}\binom{d_G(v)-1}{b} + \binom{d_G( u)-1}{b} \binom{d_G(v)-1}{a}
\end{math}
double stars on edge $uv$. Therefore,

\begin{equation}
n(S_{a,b},G) = \sum_{uv \in E(G)} \binom{d_G(u) -1}{a}\binom{d_G(v)-1}{b} + \binom{d_G(u)-1}{b} \binom{d_G( v)-1}{a}.
\end{equation}

First, we are going to show that a triangle-free graph with  maximum degree $\Delta < \frac{n}{2}$ contains strictly fewer copies of $S_{a,b}$ than the balanced complete bipartite graph assuming that $\floor{\frac{n}{2}} \geq \max\{a,b\}+1$. The following fact will be frequently used in subsequent discussion:

\begin{lemma}\label{max-edge-triangle-free}
A triangle-free graph $G$ with $n$ vertices and maximum degree $\Delta$ contains at most $\Delta(n-\Delta)$ edges, and equality holds if and only if $\Delta \geq \frac{n}{2}$ and $G$ is the complete bipartite graph $K_{\Delta,n-\Delta}$.

In particular, the balanced complete bipartite graph $K_{\ceil{n/2},\floor{n/2}}$ is the triangle-free graph with most edges.
\end{lemma}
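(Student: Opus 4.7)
The plan is to pick a vertex $v$ of maximum degree and exploit the triangle-free condition to split the vertex set into an independent part and a part we can degree-bound directly.

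First I would set $A = N(v)$ and $B = V(G) \setminus A$, so $|A| = \Delta$ and $|B| = n - \Delta$. The triangle-free hypothesis immediately yields that $A$ is independent, since any edge inside $A$ together with $v$ would form a triangle. In particular every edge of $G$ is either contained in $B$ or goes between $A$ and $B$, so writing $e(B)$ for the number of edges in $B$ and $e(A,B)$ for the number of crossing edges, one has $|E(G)| = e(B) + e(A,B)$.

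Next I would count edges by looking only at the side $B$. Every vertex of $B$ has degree at most $\Delta$, so summing over $B$ gives
\begin{equation*}
2 e(B) + e(A,B) \;=\; \sum_{w \in B} d_G(w) \;\leq\; \Delta\,(n-\Delta).
\end{equation*}
Dropping the nonnegative $e(B)$ on the left yields $|E(G)| = e(B) + e(A,B) \leq \Delta(n-\Delta)$, as required.

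For the equality case, equality in the displayed inequality forces $e(B) = 0$ and $d_G(w) = \Delta$ for every $w \in B$. Combined with the fact that $A$ is already independent, this makes $G$ bipartite with parts $A, B$ in which every vertex of $B$ is adjacent to all of $A$; since $|A| = \Delta$ this is precisely $K_{\Delta, n-\Delta}$. For this to actually realize maximum degree $\Delta$ (and not the larger value $n - \Delta$), one needs $\Delta \geq n - \Delta$, i.e.\ $\Delta \geq n/2$. Finally, to deduce the ``in particular'' clause, I would simply maximize the integer-valued function $\Delta \mapsto \Delta(n-\Delta)$ on $\{0,1,\dots,n\}$: its maximum is $\lceil n/2 \rceil \lfloor n/2 \rfloor$, attained at $\Delta = \lceil n/2 \rceil$, which corresponds exactly to the balanced complete bipartite graph. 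No real obstacle is anticipated; the only thing to be careful about is the $\Delta \geq n/2$ clause in the equality characterization, which comes from the asymmetry between the two sides of $K_{\Delta, n-\Delta}$.
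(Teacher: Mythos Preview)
Your proof is correct and follows the same overall plan as the paper---pick a vertex $v$ of maximum degree and partition $V(G)$ into $A=N(v)$ and its complement---but the bookkeeping differs in a way worth noting. The paper first records the consequence $d_G(x)+d_G(y)\le n$ for every edge $xy$, uses it to bound the degree of each vertex in $A$ by $n-\Delta$, and then sums degrees over \emph{all} vertices. You instead observe directly that $A$ is independent, so every edge meets $B$, and sum degrees only over $B$; this avoids the auxiliary degree-sum inequality entirely and gives the bound in one line. Both equality analyses arrive at $K_{\Delta,n-\Delta}$ and the constraint $\Delta\ge n/2$ in the same way. Your counting is marginally slicker; the paper's version has the small side benefit of isolating the inequality $d_G(x)+d_G(y)\le n$, which it reuses later in the double-star arguments.
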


\begin{proof}
 We know that if $xy \in E(G)$, then $d_G(x) + d_G(y) \leq n$ since otherwise $x$ and $y$ must share a common neighbor $z$ and $x,y,z$ will form a triangle. Now if $v$ is a vertex of maximum degree in $G$, then its $\Delta$ neighbors must have degree at most $n - \Delta$. For the $n-\Delta$ vertices that are not neighbors of $v$, they can have degree at most $\Delta$ since $v$ is the vertex of maximum degree. Therefore, the number of edges is bounded above by
 \begin{equation}
\frac{1}{2} \left[ \Delta(n-\Delta)+(n-\Delta)\Delta \right] = \Delta(n-\Delta).
 \end{equation}
 
 To achieve equality, if $\Delta < \frac{n}{2}$, then $\frac{n}{2} < n-\Delta$, and the maximum number of edges is $\frac{\Delta n}{2} < \Delta(n-\Delta)$. When $\Delta \geq \frac{n}{2}$, we see the $\Delta$ neighbors of the vertex of maximum degree, $v$, must have degree $n-\Delta$ to achieve equality. Since there cannot be any edges between the neighbors of $v$ as this will introduce triangles, we see the $\Delta$ neighbors of $v$ must be connected to the $n-\Delta$ vertices that are not neighbors of $v$. Thus, we will arrive at the complete bipartite graph $K_{\Delta,(n-\Delta)}$. It is now easy to verify that $K_{\Delta,(n-\Delta)}$ is triangle free, has maximum degree $\Delta$ when $d \geq \frac{n}{2}$, and contains $\Delta(n-\Delta)$ edges.

 When $n$ is fixed, since $\Delta$ is an integer, $\Delta(n-\Delta)$ is maximized at $\Delta = \floor{n/2}$ and $\Delta = \ceil{n/2}$. Therefore, we see that the balanced complete bipartite graph is the triangle-free graph with most edges.
\end{proof}

\begin{theorem}
If $G$ is a triangle-free graph with $n$ vertices with maximum degree $\Delta < \frac{n}{2}$, and $\floor{\frac{n}{2}} \geq \max\{a,b\}+1$, then $n(S_{a,b},K_{\floor{n/2}, \ceil{n/2}}) > n(S_{a,b},G)$.
\end{theorem}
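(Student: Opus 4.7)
The plan is to combine the edge-sum formula for $n(S_{a,b},G)$ with Lemma~\ref{max-edge-triangle-free} to obtain a uniform upper bound, and then compare it with the exact value for $K := K_{\lfloor n/2\rfloor,\lceil n/2\rceil}$. Since the summand $\binom{d_G(u)-1}{a}\binom{d_G(v)-1}{b} + \binom{d_G(u)-1}{b}\binom{d_G(v)-1}{a}$ is monotone non-decreasing in both degrees, it is bounded on every edge by $2\binom{\Delta-1}{a}\binom{\Delta-1}{b}$. Combined with $|E(G)| \le \Delta(n-\Delta)$ from Lemma~\ref{max-edge-triangle-free}, this yields
\[
n(S_{a,b}, G) \;\le\; 2 \Delta(n-\Delta) \binom{\Delta-1}{a}\binom{\Delta-1}{b}.
\]
On the other hand, $K$ has exactly $\lfloor n/2\rfloor\lceil n/2\rceil$ edges, each with one endpoint of degree $\lceil n/2\rceil$ and one of degree $\lfloor n/2\rfloor$, so
\[
n(S_{a,b}, K) = \lfloor n/2\rfloor\lceil n/2\rceil \left[\binom{\lceil n/2\rceil-1}{a}\binom{\lfloor n/2\rfloor-1}{b} + \binom{\lceil n/2\rceil-1}{b}\binom{\lfloor n/2\rfloor-1}{a}\right].
\]
The theorem then reduces to a factor-wise comparison between these two products.

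For the per-edge factor, the assumption $\Delta < n/2$ forces $\Delta \le \lfloor n/2\rfloor \le \lceil n/2\rceil$, so $\binom{\Delta-1}{a} \le \binom{\lfloor n/2\rfloor-1}{a} \le \binom{\lceil n/2\rceil-1}{a}$ and the same chain holds with $b$ in place of $a$; together these give $2\binom{\Delta-1}{a}\binom{\Delta-1}{b} \le \binom{\lceil n/2\rceil-1}{a}\binom{\lfloor n/2\rfloor-1}{b} + \binom{\lceil n/2\rceil-1}{b}\binom{\lfloor n/2\rfloor-1}{a}$. For the edge-count factor I want $|E(G)| < \lfloor n/2\rfloor\lceil n/2\rceil$. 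When $n$ is even, $\Delta \le n/2 - 1$ gives $\Delta(n-\Delta) \le (n/2)^2 - 1 < (n/2)^2$; when $n$ is odd and $\Delta \le (n-3)/2$, the bound $\Delta(n-\Delta) \le (n^2-9)/4$ is strictly less than $(n^2-1)/4 = \lfloor n/2\rfloor\lceil n/2\rceil$. The hypothesis $\lfloor n/2\rfloor \ge \max\{a,b\}+1$ ensures that the per-edge value for $K$ is strictly positive, so strict inequality in the edge count propagates through the product.

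The delicate case that I expect to be the main obstacle is $n$ odd with $\Delta = (n-1)/2$: there $\Delta(n-\Delta) = \lfloor n/2\rfloor\lceil n/2\rceil$, so the crude edge bound is tight and strictness cannot come from the monotonicity of $x(n-x)$. To recover strict inequality I would invoke the equality clause of Lemma~\ref{max-edge-triangle-free}, which forces $\Delta \ge n/2$ and $G \cong K_{\Delta,n-\Delta}$, both ruled out by the hypothesis $\Delta < n/2$. Hence $|E(G)| < \Delta(n-\Delta) = \lfloor n/2\rfloor\lceil n/2\rceil$ strictly in this boundary case too, and combining with the per-edge comparison gives $n(S_{a,b},G) < n(S_{a,b},K)$. (If $a = b$, the edge-sum formula double-counts each copy of $S_{a,a}$, but halving both sides of the comparison yields the same strict inequality.)
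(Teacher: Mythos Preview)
Your proof is correct and uses the same basic ingredients as the paper (the edge-sum formula for $n(S_{a,b},G)$, monotonicity of binomial coefficients, and Lemma~\ref{max-edge-triangle-free}), but you route the strict inequality through a different factor. The paper observes directly that for every edge $uv$ of $G$ one has $d_G(u)-1,\,d_G(v)-1\le\Delta-1<\lceil n/2\rceil-1$, and since $\lfloor n/2\rfloor-1\ge\max\{a,b\}$ this makes the per-edge count \emph{strictly} smaller than the per-edge count on any edge of $K_{\lfloor n/2\rfloor,\lceil n/2\rceil}$; it then combines this with the non-strict bound $|E(G)|\le|E(K_{\lfloor n/2\rfloor,\lceil n/2\rceil})|$ and is done. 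You instead cap the per-edge term by the symmetric quantity $2\binom{\Delta-1}{a}\binom{\Delta-1}{b}$, which yields only a non-strict comparison against the $K$ per-edge value when $\Delta=\lfloor n/2\rfloor$, and you then recover strictness from the edge count via the equality clause of Lemma~\ref{max-edge-triangle-free}. Both arguments are valid; the paper's is a bit shorter because it avoids the parity case split and the boundary case $\Delta=(n-1)/2$, while yours makes fuller use of what Lemma~\ref{max-edge-triangle-free} says about when $\Delta(n-\Delta)$ is actually attained. Your parenthetical remark about $a=b$ is also a worthwhile clarification that the paper leaves implicit.
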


\begin{proof}
Suppose $G$ is a triangle-free graph with $n$ vertices and with maximum degree $\Delta < \frac{n}{2}$. We know for each edge $uv \in G$, the number of double stars $S_{a,b}$ with central edge $uv$ is 
\begin{equation}
    \begin{aligned}
            &\binom{d(u) - 1}{a} \binom{d(v) - 1}{b} + \binom{d(u) - 1}{b} \binom{d(v)- 1}{a} \\
    &< \binom{\lfloor n/2 \rfloor - 1}{a}\binom{\lceil n/2 \rceil - 1}{b} + \binom{\lfloor n/2 \rfloor - 1}{b}\binom{\lceil n/2 \rceil - 1}{a}
    \end{aligned}
\end{equation}
since the binomial coefficients $\binom{x}{a}$ are strictly increasing when $x \geq a$ and we have $\floor{n/2} -1 \geq \max \{a,b\}$, which just says for any edge $uv \in E(G)$, the number of double stars with central edge $uv$ is less than the number of double stars with a given edge in $ K_{\floor{n/2}, \ceil{n/2}}$ as central edge. Furthermore, we know that the balanced complete bipartite graph $ K_{\floor{n/2}, \ceil{n/2}}$ maximizes the number of edges for triangle-free graphs with $n$ vertices, so we conclude that $n(S_{a,b},G) < n(S_{a,b}, K_{\floor{n/2}, \ceil{n/2}})$.
\end{proof}

Next, we are going to show even if the graph $G$ has large maximum degree, we can always find a complete bipartite graph $K_{\Delta',n-\Delta'}$ such that $n(S_{a,b},K_{\Delta',n-\Delta'}) \geq n(S_{a,b},G)$. This time, the complete bipartite graph does not have to be balanced:

\begin{theorem}
Suppose $G$ is a triangle-free graph with $n$ vertices and with maximum degree $\Delta \geq \frac{n}{2}$. There exists a $\Delta'$ such that $\frac{n}{2} \leq \Delta' \leq \Delta$ and $n(S_{a,b},K_{\Delta',n-\Delta'}) \geq n(S_{a,b},G)$. Furthermore, if $\floor{\frac{n}{2}} \geq \max\{a,b\}+1$ and $G$ is not a complete bipartite graph, the inequality is strict.
\end{theorem}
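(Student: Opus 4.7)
The plan is to bound each edge's contribution in the formula
$n(S_{a,b}, G) = \sum_{uv \in E(G)} f(d(u), d(v))$
(with $f(x,y) := \binom{x-1}{a}\binom{y-1}{b} + \binom{x-1}{b}\binom{y-1}{a}$) by a single quantity $M$, then combine this with the edge bound $|E(G)| \le \Delta(n-\Delta)$ from Lemma~\ref{max-edge-triangle-free}. I set $g(x) := f(x, n-x)$, which satisfies the symmetry $g(x) = g(n-x)$, and define $M := \max_{x \in [n-\Delta,\, \Delta]} g(x)$. If every edge $uv \in E(G)$ satisfies $f(d(u), d(v)) \le M$, then $n(S_{a,b}, G) \le \Delta(n-\Delta)\, M$, which will be the main inequality to beat.

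For the per-edge bound I would assume WLOG $d(u) \le d(v)$; triangle-freeness gives $d(u) \le n - d(v)$ and the hypothesis gives $d(v) \le \Delta$, while $f$ is nondecreasing in each argument. In the case $d(v) \ge n - \Delta$, push $d(u)$ up to $n - d(v)$ to get $f(d(u), d(v)) \le f(n - d(v), d(v)) = g(d(v))$; since $d(v) \in [n-\Delta, \Delta]$, this is at most $M$. In the case $d(v) < n - \Delta$, both arguments are below $n - \Delta \le \Delta$, so monotonicity yields $f(d(u), d(v)) \le f(n-\Delta, \Delta) = g(\Delta) \le M$. The main obstacle is precisely this second case: one cannot always push $d(u)$ up to $n - d(v)$ because $n - d(v)$ can exceed $\Delta$, so the cap at $\Delta$ must be invoked separately, and the symmetry of $g$ is what lets the two bounds share a common ceiling $M$.

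The choice of $\Delta'$ is then essentially forced. By the symmetry $g(x) = g(n-x)$, the maximum $M$ on $[n-\Delta, \Delta]$ is attained at some integer $\Delta' \in [\ceil{n/2}, \Delta]$. For this $\Delta'$ we have $n(S_{a,b}, K_{\Delta', n-\Delta'}) = \Delta'(n-\Delta')\, g(\Delta') = \Delta'(n-\Delta')\, M$. Since the parabola $x(n-x)$ is nonincreasing on $[n/2, n]$ and $\Delta' \le \Delta$, we get $\Delta'(n-\Delta') \ge \Delta(n-\Delta)$, hence $n(S_{a,b}, K_{\Delta', n-\Delta'}) \ge \Delta(n-\Delta)\, M \ge n(S_{a,b}, G)$.

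For the strict version, if $G$ is not complete bipartite, Lemma~\ref{max-edge-triangle-free} gives $|E(G)| < \Delta(n-\Delta)$, so the chain becomes strict provided $M > 0$. The hypothesis $\floor{n/2} \ge \max\{a, b\} + 1$ makes both $\binom{\floor{n/2} - 1}{a}$ and $\binom{\floor{n/2} - 1}{b}$ strictly positive, so $g(\ceil{n/2}) > 0$ and hence $M > 0$, yielding the desired strict inequality.
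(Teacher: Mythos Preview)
Your proof is correct and follows essentially the same route as the paper: bound every edge contribution $f(d(u),d(v))$ by the single value $M = f(\Delta', n-\Delta')$ for the maximizer $\Delta'$ of $g(x)=f(x,n-x)$ on $[n/2,\Delta]$, then combine with the edge bound $|E(G)|\le\Delta(n-\Delta)\le\Delta'(n-\Delta')$ from Lemma~\ref{max-edge-triangle-free}. The only cosmetic difference is that the paper reduces $\max_{u,v\le\Delta,\,u+v\le n} f(u,v)$ to the boundary $u+v=n$ by a single ``increase whichever coordinate is below $\Delta$'' argument, whereas you split into the cases $d(v)\ge n-\Delta$ and $d(v)<n-\Delta$; the content is the same, and your handling of the strict case is identical to the paper's.
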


\begin{proof}
Let
\begin{equation}
f(x,y) = \binom{x-1}{a}\binom{y-1}{b} + \binom{x-1}{b}\binom{y-1}{a}
\end{equation}
denote the number of double stars on a given central edge $uv$ such that $d_G(u) = x$ and $d_G(v) = y$. If $u+v < n$ and $\Delta \geq \frac{n}{2}$, then either $u$ or $v$ must be strictly less than $\Delta$. Without loss of generality, we may assume that $u < \Delta$. It follows that $u+1 \leq \Delta$ and $f(u,v) \leq f(u+1,v)$. As a result, to find the maximum of $f(u,v)$ under the constraints $u,v \leq \Delta, u+v \leq n$, we only need to consider $u,v$ such that $u+v = n$ and $u,v \leq \Delta$. In other words, we have the following identity:
\begin{equation}
\max_{u,v \leq \Delta,u+v \leq n}f(u,v) = \max_{u,v \leq \Delta, u+v=n} f(u,v).    
\end{equation}
If $u+v = n$, without loss of generality, we may assume that $u \geq \frac{n}{2}$. It follows that
\begin{equation}
\max_{u,v \leq \Delta,u+v = n} f(u,v) = \max_{n/2 \leq u \leq \Delta} f(u,n-u).    
\end{equation}

Now let $u = \Delta'$ be the number that maximizes $f(u,n-u)$ under the constraint $n/2 \leq u \leq \Delta$. We are going to show that $n(S_{a ,b},K_{\Delta',n-\Delta'}) \geq n(S_{a,b},G)$:
\begin{equation}
\label{thm2-heart}
    \begin{aligned}
         n(S_{a,b},G) &= \sum_{uv \in E} \binom{d_G(u) -1}{a}\binom{d_G(v) - 1}{b} + \binom{d_G(u) - 1}{b}\binom{d_G(v) - 1}{a} \\
    &= \sum_{uv \in E} f(d_G(u),d_G(v)) \\ 
    &\leq \sum_{uv \in E} f(\Delta',n-\Delta') \\ 
    &= |E(G)|f(\Delta',n-\Delta') \\ 
    &\leq \Delta'(n-\Delta')f(\Delta',n-\Delta') \\ 
    &= n(S_{a,b},G_{\Delta',n-\Delta'}).
    \end{aligned}
\end{equation}

As $G$ is a triangle-free graph with maximum degree $\Delta$, $d_G(u),d_G(v) \leq \Delta$ and $d_G(u) + d_G(v) \leq n$. Thus, 
\begin{equation}
    \displaystyle f(d_G(u),d_G(v)) \leq \max_{x,y \leq \Delta,x+y\leq n} f(x,y) = \max_{n/2 \leq x \leq \Delta}f(x,n-x) = f(\Delta',n-\Delta'),
\end{equation}
and the inequality in the third line of Equation \ref{thm2-heart} holds. By Lemma \ref{max-edge-triangle-free}, since $G$ has maximum degree $\Delta$, $|E(G)| \leq \Delta(n-\Delta)$. Since $\frac{n}{2} \leq \Delta' \leq \Delta$, we have $|E(G)| \leq \Delta(n-\Delta) \leq \Delta'(n-\Delta')$, and thus the inequality in the fifth line of Equation \ref{thm2-heart} holds. 

Therefore, we conclude that there exists $\Delta'$ such that $\frac{n}{2} \leq \Delta' \leq \Delta$ and $n(S_{a,b},G) \leq n(S_{a,b},G_{\Delta',n-\Delta'})$.

If $G$ is not a complete bipartite graph, by Lemma \ref{max-edge-triangle-free}, we have $|E(G)| < \Delta(n-\Delta) \leq \Delta'(n -\Delta')$. When $\floor{n/2} \geq \max\{a,b\}+1$, 
\begin{equation}
    f(\Delta',n-\Delta') = \max_{u,v \leq \Delta,u+v = n}f(u,v) \geq f(\ceil{n/2},\floor{n/2}) >0.
\end{equation}
Therefore, the fifth line of  Equation \ref{thm2-heart} is a strict inequality and we have $n(S_{a,b},G) < n(S_{a,b},K_{\Delta',n-\Delta'})$.
\end{proof}

The above result tells us we may limit the search of the triangle-free graph with most copies of $S_{a,b}$ with fixed number of vertices to complete bipartite graphs. A natural follow-up question to ask is to determine when the Tur\'an graph $K_{\lfloor \frac{n}{2} \rfloor, \lceil \frac{n}{2} \rceil}$ maximizes the number of copies of $S_{a,b}$. First, we are going to show when $a = b$, then the Tur\'an graph $K_{\lfloor \frac{n}{2} \rfloor, \lceil \frac{n}{2} \rceil}$ maximizes $n(S_{a,b},G)$.

\begin{theorem}
The Tur\'an graph $K_{\lfloor \frac{n}{2} \rfloor, \lceil \frac{n}{2} \rceil}$ maximizes $n(S_{a,a},G)$ assuming that $G$ is triangle-free with $n$ vertices.
\end{theorem}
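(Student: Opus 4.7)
The plan is to combine the two preceding theorems with a one-parameter optimization over complete bipartite graphs. Together, Theorem~1 and Theorem~2 already imply that for every triangle-free $G$ on $n$ vertices there exists an integer $k$ with $\ceil{n/2}\le k\le n-1$ such that $n(S_{a,a},G)\le n(S_{a,a},K_{k,n-k})$ (if $\floor{n/2}<a+1$ both sides are zero and the statement is vacuous). Hence it remains to show that within this one-parameter family of complete bipartite graphs the count of $S_{a,a}$ is maximized at $k=\ceil{n/2}$.

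To do this, I would first record a clean closed form. Each of the $k(n-k)$ edges of $K_{k,n-k}$ has endpoints of degrees $k$ and $n-k$, and since $a=b$ makes the two halves of an $S_{a,a}$ interchangeable, each edge supports exactly $\binom{k-1}{a}\binom{n-k-1}{a}$ copies --- not twice as many, as a naive application of the formula from Section~2 would suggest (that derivation was stated under the hypothesis $a\neq b$). Using the identity $k\binom{k-1}{a}=(a+1)\binom{k}{a+1}$,
\[
n(S_{a,a},K_{k,n-k})=k(n-k)\binom{k-1}{a}\binom{n-k-1}{a}=(a+1)^{2}\binom{k}{a+1}\binom{n-k}{a+1}.
\]

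It then suffices to prove that $h(k):=\binom{k}{a+1}\binom{n-k}{a+1}$ is maximized at $k=\ceil{n/2}$. I would do this via a ratio test: a short computation gives
\[
\frac{h(k+1)}{h(k)}=\frac{(k+1)(n-k-a-1)}{(k-a)(n-k)},
\]
and clearing denominators shows this ratio is $\ge 1$ exactly when $n-2k-1\ge 0$. Thus $h$ is unimodal on $\{a+1,\dots,n-a-1\}$ with peak at $k=\floor{n/2}$ (equivalently at $\ceil{n/2}$, by the symmetry $h(k)=h(n-k)$), which completes the proof.

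The heart of the argument is the binomial unimodality in the last step, which is essentially a one-line calculation with no genuine obstacle. The only point demanding care is the counting asymmetry mentioned above: the automorphism of $S_{a,a}$ swapping its two centers causes the ``$+$'' in the Section~2 formula to double-count copies in the $a=b$ case, and this factor of $2$ must be suppressed before the optimization.
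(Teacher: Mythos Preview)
Your proof is correct and follows the same overall strategy as the paper: invoke the two preceding theorems to reduce to complete bipartite graphs, write down the explicit count $x(n-x)\binom{x-1}{a}\binom{n-x-1}{a}$ (with the same caveat about the automorphism of $S_{a,a}$), and then show this one-variable function is maximized at $x=\lfloor n/2\rfloor$. The only difference is in the execution of that last step. The paper expands the product as (a constant times) $\prod_{i=0}^{a}(x-i)(n-x-i)$ and observes that each factor is a downward-opening quadratic in $x$ peaking at $\lfloor n/2\rfloor$, so the whole product does too. You instead absorb the leading $x(n-x)$ into the binomial coefficients via $k\binom{k-1}{a}=(a+1)\binom{k}{a+1}$ and establish unimodality by a ratio test. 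Both arguments are short and elementary; the paper's factor-by-factor version is perhaps a shade more transparent, while your rewriting as $(a+1)^2\binom{k}{a+1}\binom{n-k}{a+1}$ has the pleasant feature of making the symmetry $k\leftrightarrow n-k$ manifest.
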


\begin{proof}
Since we have a symmetric double star $S_{a,a}$, the number of double stars with central edge $uv$ is
\begin{math}
\binom{d_G(u) -1}{a} \binom{d_G(v) - 1}{a}.
\end{math}
Since we may limit the search of graphs maximizing $n(S_{a,a},G)$ to complete bipartite graphs, for a complete bipartite graph $K_{x,n-x}$, we have:
\begin{equation}
n(S_{a,a},K_{x,n-x}) = x(n-x)\binom{x-1}{a}\binom{n-x-1}{a}.
\end{equation}

We know that 
\begin{equation}
    \binom{x-1}{a}\binom{n-x-1}{a} = \frac{1}{a!} \prod_{i=1}^a (x-i)(n-x-i).
\end{equation}
Each term $(x-i)(n-x-i) = -x^2 + nx + (i^2 - in)$ is maximized at $x = \lfloor \frac{n}{2} \rfloor$ and $x = \lceil \frac{n}{2} \rceil$ assuming that $x$ is an integer. Furthermore, $x(n-x)$ is also maximized at  $x = \lfloor \frac{n}{2} \rfloor$ and $x = \lceil \frac{n}{2} \rceil$ assuming that $x$ is an integer. Therefore, we conclude that the Tur\'an graph $K_{\lfloor \frac{n}{2} \rfloor, \lceil \frac{n}{2} \rceil}$ maximizes $n(S_{a,a},G)$ assuming that $G$ is triangle-free with $n$ vertices.
\end{proof}

Furthermore, we will show that the Tur\'an graph $K_{\lfloor \frac{n}{2} \rfloor, \lceil \frac{n}{2} \rceil}$ maximizes $n(S_{1,3},G)$ but not $n(S_{1,4},G)$:

\begin{theorem}
  The Tur\'an graph $K_{\lfloor \frac{n}{2} \rfloor, \lceil \frac{n}{2} \rceil}$ maximizes $n(S_{1,3}, K_3)$ when $n \geq 13$.
\end{theorem}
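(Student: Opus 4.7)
By Theorem 2, the maximum of $n(S_{1,3},G)$ over triangle-free graphs $G$ on $n$ vertices is attained by some complete bipartite graph $K_{x,n-x}$, so it suffices to maximize
$$g(x) := n(S_{1,3},K_{x,n-x}) = x(n-x)\!\left[(n-x-1)\binom{x-1}{3} + (x-1)\binom{n-x-1}{3}\right]$$
over integers $4 \le x \le n-4$ (outside this range one of the binomials vanishes). Since $g(x) = g(n-x)$, I may restrict to $x \le \lfloor n/2 \rfloor$ and aim to show $g(\lfloor n/2 \rfloor) \ge g(x)$ throughout this range, for every $n \ge 13$.

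Substituting $s := (x - n/2)^2$ and expanding the binomials via $\binom{m}{3} = m(m-1)(m-2)/6$ collapses $g$ into the cubic form
$$g(x) = \tfrac{1}{3}(A - s)(B - s)(C + s), \qquad A = \tfrac{n^2}{4},\ B = \bigl(\tfrac{n}{2}-1\bigr)^2,\ C = \bigl(\tfrac{n}{2}-2\bigr)\bigl(\tfrac{n}{2}-3\bigr),$$
verified by noting that $x(n-x) = A - s$, $(x-1)(n-x-1) = B - s$, and $(x-2)(x-3) + (n-x-2)(n-x-3) = 2(C + s)$. So $g$ depends on $x$ only through the single variable $s$. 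The admissible values of $s$ are $\{0,1,4,\ldots\}$ for $n$ even and $\{1/4, 9/4, 25/4, \ldots\}$ for $n$ odd, and the Tur\'an graph corresponds to the smallest value $s_{\min}\in\{0, 1/4\}$. The task is now to establish $(A-s_{\min})(B-s_{\min})(C+s_{\min}) \ge (A-s)(B-s)(C+s)$ for every larger admissible $s$.

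The subtlety behind the bound $n \ge 13$ is that at small $n$ the real-valued derivative $\tfrac{d}{ds}(A-s)(B-s)(C+s)\big|_{s=s_{\min}}$ is in fact \emph{positive}: a direct computation at $n = 13$, for instance, gives $g(5) = g(6) = g(7) = g(8) = 6720$, so the Tur\'an graph only ties for the maximum and a calculus-based monotonicity argument cannot conclude. The main obstacle is therefore the discrete comparison. I would write, for each $k \ge 1$, the difference $\Delta_k(n) := g(\lfloor n/2 \rfloor) - g(\lfloor n/2 \rfloor - k)$ as an explicit polynomial in $n$: expanding $h(s)=ABC+(AB-(A+B)C)s+(C-A-B)s^2+s^3$ shows that the $ABC$ term cancels in $\Delta_k$, leaving a polynomial in $n$ whose leading behavior is governed by $(AB-(A+B)C)(s_{\min}-s_k) \sim \tfrac{1}{16}n^4\cdot k^2 \cdot (\text{positive})$ for large $n$, making $\Delta_k \ge 0$ asymptotically. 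The critical step is then to verify the inequality down at the boundary $n = 13$; the tightest comparison is $k = \lfloor n/2\rfloor - 4$ (the case $x = 4$, where $\binom{x-1}{3}=1$), which should be handled first, and the remaining $k$ follow from monotonicity of $\Delta_k(n)$ in $n$ combined with the base-case verifications at $n \in \{13,14\}$.
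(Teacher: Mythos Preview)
Your reduction to complete bipartite graphs and the substitution $s=(x-n/2)^2$ yielding $g(x)=\tfrac13(A-s)(B-s)(C+s)$ are both correct; this symmetrization is a genuinely different algebraic route from the paper's. The paper instead computes the consecutive difference
\[
f(n,x+1)-f(n,x)=\frac{x}{3}(n-1-2x)(n-1-x)\bigl[3x^2+(3-3n)x+(n^2-6n+14)\bigr],
\]
observes that for $\lceil n/2\rceil\le x<n-1$ the first three factors have fixed signs, and then notes that the quadratic factor is positive for all real $x$ precisely when its discriminant $-3(n^2-18n+53)$ is negative, i.e.\ when $n>9+2\sqrt7\approx14.29$. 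This settles all $n\ge15$ at once; the cases $n=13,14$ are checked by hand (where $K_{6,7}$ ties $K_{5,8}$, and $K_{7,7}$ ties $K_{6,8}$). Your $s$-variable picture is conceptually neat, but the paper's factorization reaches the finish line more directly.

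Where your proposal breaks down is the endgame. Two concrete issues. First, the ``tightest comparison'' is $k=1$, not $k=\lfloor n/2\rfloor-4$: at $n=13$ one has $\Delta_1=0$ while $\Delta_2=g(6)-g(4)=6720-6336=384>0$, so the near-balanced case is the delicate one, not the extreme one. Second, and more seriously, the plan ``verify $\Delta_k$ at $n\in\{13,14\}$ and invoke monotonicity of $\Delta_k(n)$ in $n$'' cannot close the argument, because the set of admissible $k$ grows with $n$: at $n=13$ only $k\le2$ is in range, so a base-case check there says nothing about, say, $\Delta_5(18)$. You would need either a uniform estimate over all admissible pairs $(k,n)$, or---what is essentially the paper's method rewritten in your variables---control of the \emph{consecutive} differences $h(s_j)-h(s_{j+1})$, whose sign is governed by a single quadratic condition (equivalent to the discriminant inequality above) and therefore handles every $k$ simultaneously once $n\ge15$.
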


\begin{proof}
This follows from a direct computation. Let
\begin{equation}
    f(n,x) := n(S_{1,3},K_{x,n-x}) = x(n-x) \left( (x-1)\binom{n-x-1}{3} + (n-x-1)\binom{x-1}{3} \right).
\end{equation}

Now it follows that
\begin{equation}
f(n,x+1)-f(n,x) = \frac{x}{3}(n-1-2x)(n-1-x) \left[ 3x^2 + (3-3n)x + (n^2 - 6n + 14) \right].
\end{equation}

If $\lceil \frac{n}{2} \rceil \leq x < n-1$, we have $n - 1 - 2x < 0$ and $n - 1 - x > 0$. We know that the discriminant for $3x^2 + (3-3n)x + (n^2 - 6n + 14)$ is 
\begin{equation}
    \Delta = (3-3n)^2 - 4 \cdot 3 \cdot (n^2 - 6n+14) = -3(n^2 - 18n+ 53).
\end{equation}
As a result, if $\Delta < 0$, we must have $3x^2 + (3-3n)x + (n^2 - 6n+14) > 0$ for all $x$, and therefore if $\Delta < 0$ then we must have $f(n,x+1) - f(n,x) < 0$ for all $\lceil \frac{n}{2} \rceil \leq x < n-1$. Solving $\Delta < 0$ yields $n > 9+2\sqrt{7} \approx 14.29$, and if $n = 13$ or $n = 14$ we can manually verify that the Tur\'an graphs maximize $n(S_{1,3},G)$. When $n = 13$, we have $n(S_{1,3},K_{6,7}) = n(S_{1,3},K_{5,8}) = 6720$. When $n = 14$, we have $n(S_{1,3},K_{7,7}) = n(S_{1,3},K_{6,8}) = 11760$.
\end{proof}

\begin{theorem}
Suppose $x_0$ maximizes $n(S_{1,4},K_{x,n-x})$ over $[\frac{n}{2},n-1]$. Then $\displaystyle \lim_{n \rightarrow \infty} \frac{x_0}{n} = \frac{2}{3}$. In other words, if the number of vertices $n$ approaches infinity, the complete bipartite graph $K_{x,n-x}$ maximizing $n(S_{1,4},G)$ contains two parts with approximately $\frac{2}{3}n$ and $\frac{1}{3}n$ vertices each.
\end{theorem}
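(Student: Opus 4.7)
The plan is to analyze $F(n,x) := n(S_{1,4}, K_{x,n-x})$ in the large-$n$ limit by rescaling $x = \alpha n$, extracting the leading term in $n$, and showing that the resulting continuous optimization over $\alpha \in [1/2, 1]$ is maximized at $\alpha = 2/3$. By the same bookkeeping used for $S_{1,3}$ in the previous theorem,
\begin{equation*}
F(n,x) = x(n-x)\left[(n-x-1)\binom{x-1}{4} + (x-1)\binom{n-x-1}{4}\right].
\end{equation*}
After substituting $x = \alpha n$ and expanding the binomials as $\binom{\alpha n -1}{4} = \tfrac{(\alpha n)^4}{24} + O(n^3)$, the two summands contribute $\tfrac{1}{24}\alpha^5(1-\alpha)^2 n^7$ and $\tfrac{1}{24}\alpha^2(1-\alpha)^5 n^7$ to leading order, giving
\begin{equation*}
\frac{F(n,\alpha n)}{n^7} = \frac{g(\alpha)}{24} + O(n^{-1}), \qquad g(\alpha) := \alpha^2(1-\alpha)^2\bigl[\alpha^3 + (1-\alpha)^3\bigr],
\end{equation*}
with the error uniform on $[1/2, 1]$.

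The heart of the argument is maximizing $g$ on $[1/2, 1]$, and the clean way is to set $u := \alpha(1-\alpha) \in [0, 1/4]$. Since $\alpha + (1-\alpha) = 1$, the identity $a^3 + b^3 = (a+b)^3 - 3ab(a+b)$ gives $\alpha^3 + (1-\alpha)^3 = 1 - 3u$, reducing $g$ to the one-variable function $h(u) = u^2(1 - 3u)$. A one-line calculus check yields $h'(u) = u(2-9u)$, whose only nontrivial root is $u^\star = 2/9$, which lies in $[0, 1/4]$; comparing $h(2/9) = 4/243$ against the boundary values $h(0)=0$ and $h(1/4)=1/64$ confirms that $u^\star$ is the strict maximum. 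Solving $\alpha(1-\alpha) = 2/9$ under the constraint $\alpha \geq 1/2$ yields the unique value $\alpha^\star = 2/3$.

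The convergence $x_0/n \to 2/3$ then follows by a standard compactness argument: any subsequential limit $\alpha'$ of $x_0/n$ in $[1/2, 1]$ must be a maximizer of $g$ by uniform convergence together with the optimality of $x_0$, and uniqueness of the maximizer forces $\alpha' = 2/3$. The only step needing a bit of care is the uniform convergence on the full interval together with ruling out a drift to the boundary $\alpha \to 1$, where $g$ vanishes; this is routine because the $O(n^{-1})$ error in the rescaled formula is a polynomial expression in $\alpha$ whose coefficients are bounded independently of $\alpha$. No step here is a genuine obstacle once the substitution $u = \alpha(1-\alpha)$ is made, and the whole computation can be presented in a few lines.
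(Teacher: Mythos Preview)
Your argument is correct and takes a genuinely different route from the paper's proof. The paper works with the exact discrete forward difference
\[
f(n,x+1)-f(n,x)=\tfrac{x}{24}(n-6)(n-1-2x)(n-1-x)\bigl[9x^2+(9-9n)x+(2n^2-9n+22)\bigr],
\]
reads off the sign changes by solving the quadratic factor explicitly, and obtains the maximizer as the closed-form root $x_0=\tfrac{3n-3+\sqrt{n^2+18n-79}}{6}$, from which $\lim x_0/n=2/3$ is immediate. You instead pass to the rescaled limit $F(n,\alpha n)/n^7\to g(\alpha)/24$, then exploit the substitution $u=\alpha(1-\alpha)$ and the identity $\alpha^3+(1-\alpha)^3=1-3u$ to reduce the problem to maximizing the cubic $h(u)=u^2(1-3u)$; the rest is a clean uniqueness-plus-compactness argument.

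What each approach buys: the paper's factorization delivers the exact integer maximizer for every finite $n$ (and hence a rate of convergence, $x_0/n=2/3+O(1/n)$), at the cost of a computation that is specific to $S_{1,4}$. Your asymptotic method is shorter, more conceptual, and clearly adaptable to other $S_{a,b}$: the same rescaling would give the polynomial $\alpha^{a+1}(1-\alpha)^{b+1}+\alpha^{b+1}(1-\alpha)^{a+1}$, tying directly into the table the paper presents immediately afterward. The symmetric-function trick $u=\alpha(1-\alpha)$ is a nice touch that the paper does not use.
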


\begin{proof}
This also comes directly from computation. Let
\begin{equation}
f(n,x) := n(S_{1,4},K_{x,n-x}) = x(n-x)\left( (x-1)\binom{n-x-1}{4} + (n-x-1)\binom{x-1}{4} \right).
\end{equation}

Now it follows that
\begin{equation}
f(n,x+1) - f(n,x) = \frac{x}{24}(n-6)(n-1-2x)(n-1-x)\left[ 9x^2 + (9-9n)x + (2n^2 - 9n + 22) \right].
\end{equation}

Assuming that $n > 6$, if $\lceil \frac{n}{2} \rceil \leq x < n-1$, we must have $n-6 > 0$, $n-1-2x < 0$, $n-1-x > 0$. We need to look at $9x^2 + (9-9n)x + (2n^2 - 9n+22)$. The discriminant is 
\begin{equation}
    \Delta = (9-9n)^2 - 36(2n^2 - 9n+22) = 9(n^2 + 18n - 79),
\end{equation}
and therefore we conclude that if $n > 4\sqrt{10} - 9 \approx 3.65$, $\Delta > 0$ and there are two real roots for $9x^2 + (9-9n)x + (2n^2 - 9n+22) = 0$. The roots are
\begin{equation}
x_0 = \frac{3n - 3 \pm \sqrt{n^2 + 18n - 79}}{6}.
\end{equation}

We will discard the root $x = \frac{3n - 3 - \sqrt{n^2 + 18n-79}}{6} < \frac{n}{2}$ for our purpose and let $x_0 = \frac{3n - 3 + \sqrt{n^2 + 18n-79}}{6}$. Now it follows assuming that $n > 6$, we have $f(n,x+1) - f(n,x) < 0$ if $\lceil \frac{n}{2} \rceil \leq x < x_0$ and $f(n,x+1) - f(n,x) > 0$ if $x_0 < x < n-1$.

As we have $\displaystyle \lim_{n \rightarrow \infty}\frac{x_0}{n} = \frac{2}{3}$, we may conclude if the number of vertices $n$ approaches infinity, the complete bipartite graph $K_{x,n-x}$ maximizing $S_{1,4}(G)$ contains two parts with approximately $\frac{2}{3}n$ and $\frac{1}{3}n$ vertices each.
\end{proof}

We might want to consider what is going to happen should the number of vertices in the double star increases, and when the Tur\'an graph achieves $\Extremal(n,S_{a,b},K_3)$. Unfortunately, when $a$ and $b$ get larger, it is more difficult to study the behavior of 
\begin{equation}
n(S_{a,b},K_{x,n-x}) = x(n-x) \left( \binom{x-1}{a}\binom{n-x-1}{b} + \binom{x-1}{b}\binom{n-x-1}{a} \right).
\end{equation} Nevertheless, we may approximate \begin{math}
n(S_{a,b},K_{x,n-x})
\end{math}
with 
\begin{equation}
    x(n-x)(x^a(n-x)^b + x^b(n-x)^a) = [x(n-x)]^{a+1} \left( x^{b-a} + (n-x)^{b-a} \right).
\end{equation} Since $x(n-x)$ is decreasing over $\frac{n}{2} \leq x \leq n$ while $x^{b-a} + (n-x)^{b-a}$ is increasing over $\frac{n}{2} \leq x \leq n$, when the difference $b-a$ is large compared to $a+1$, we expect $x^{b-a} + (n-x)^{b-a}$ dominates and the extremal graph will be a more unbalanced complete bipartite graph. Here is a table of the $x \in [\frac{1}{2},1]$ that maximizes $x^a(1-x)^b + x^b(1-x)^a$. 

\begin{center}
\begin{tabular}{c|ccccccccc}
    $b = $ & 1 & 2 & 3 & 4 & 5 & 6 & 7 & 8 & 9 \\
    \hline
    $a = 1$ & $\frac{1}{2}$ & $\frac{1}{2}$ & $\frac{1}{2}$ & $\frac{3+\sqrt{3}}{6}$ & .832 & .857 & .875 & .889 & .900  \\ 
    $a = 2$ & & $\frac{1}{2}$ & $\frac{1}{2}$ & $\frac{1}{2}$ & $\frac{2}{3}$ & .743 & .777 & .800 & .818 \\ 
    $a= 3$ & & & $\frac{1}{2}$ & $\frac{1}{2}$ & $\frac{1}{2}$& $\frac{1}{2}$ & .682 & $\frac{5+\sqrt{5}}{10}$ & .749 \\ 
    $a = 4$ & & & & $\frac{1}{2}$ & $\frac{1}{2}$ & $\frac{1}{2}$ & .633 & .684 & .712 \\ 
    $a = 5$ & & & & & $\frac{1}{2}$ & $\frac{1}{2}$ & $\frac{1}{2}$ & $\frac{1}{2}$ & .585 \\ 
    $a = 6$ & & & & & & $\frac{1}{2}$ & $\frac{1}{2}$ & $\frac{1}{2}$ & $\frac{1}{2}$ \\
\end{tabular}
\end{center}

From this table, we see when $a$ is fixed and $b \geq a$, then the $x_{\max}$ that maximizes $x^a(1-x)^b + x^b(1-x)^a$ over $[\frac{1}{2},1]$ will not decrease. We also see if $b$ is sufficiently large compared to $a$, then $x_{\max} \approx \frac{b}{a+b}$. We know that $x^b(1-x)^a$ is maximized at $\frac{b}{a+b}$, so this result should not be too surprising.

\section{Adjacent Vertex Condition}
In the double star problem we have just studied, the forbidden subgraph is the triangle and we are maximized the number of double stars. One possible way to invert the problem is to put conditions on the double stars in the graph and count the number of triangles.

This leads us to the ``adjacent vertex condition." In this problem the forbidden subgraphs $G$  are double stars $S_{a,b}$ for all $a,b$ such that $a+b = k$ for some fixed $k$. This is equivalent to saying $d_G(x) + d_G(y) < k + 2$ for all $xy \in E(G)$. Therefore, a related problem is if for every $xy \in E(G)$ the degree sum is large (\textit{i.e.} $d_G(x) + d_G(y) \geq k$ for some $k$) what the minimum number of triangles in the graph will be.

If for all $xy \in E(G)$, $d_G(x) + d_G(y) \geq k$ for some $k \leq n$, then the graph can be triangle-free. Any complete bipartite graph $K_{a,b}$ with $n$ vertices will be triangle-free and satisfies $d_G(x) + d_G(y) = n$ for all $xy \in K_{a,b}$. Thus, a more interesting question is what will happen if $d_G(x) + d_G(y) \geq n + k$ for some constant $k > 0$. We are going to prove if $n$ is sufficiently large compared to $k$, then such a graph will contain at least $\binom{k+1}{2}(n-k-1)+\binom{k+1}{3}$ triangles.\\

\begin{theorem}
\label{thm:adjacent}
Let $G$ be a connected graph with $n \geq 6(k+1)(k+2)$ vertices and for any $xy \in E(G)$, $d_G(x) + d_G(y) \geq n + k$. Then $G$ contains at least $\binom{k+1}{2}(n-k-1)+\binom{k+1}{3}$ triangles.
\end{theorem}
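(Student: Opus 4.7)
The starting point is the observation that for any edge $xy\in E(G)$,
\[
|N(x)\cap N(y)|\ \ge\ d(x)+d(y)-n\ \ge\ k,
\]
so every edge lies in at least $k$ triangles, and hence
\[
3\,t_3(G)\ \ge\ \sum_{xy\in E(G)}\bigl(d(x)+d(y)-n\bigr).
\]
The plan is to partition $V(G)$ into $S=\{v\in V(G):d(v)\ge (n+k)/2\}$ and $T=V(G)\setminus S$. Since every edge has degree sum $\ge n+k$, at least one of its endpoints lies in $S$, so $T$ is an independent set and $N(u)\subseteq S$ for every $u\in T$. Connectedness of $G$ then ensures that every $u\in T$ has a neighbour $v\in S$, and the bound $|N(u)\cap N(v)|\ge k$ combined with $N(u)\cap N(v)\subseteq S\setminus\{v\}$ yields the two structural facts I will need: $|S|\ge k+1$, and $d(u)\ge k+1$ for every $u\in T$ (so in particular $|E(S,T)|\ge(k+1)(n-|S|)$).

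Next I would treat the extremal case $|S|=k+1$ separately. Here the inequality $|N(u)\cap N(v)|\ge k=|S\setminus\{v\}|$ actually forces $N(u)=S$ for every $u\in T$, so $d(u)=k+1$; then the degree-sum condition on an edge from $T$ to $S$ gives $d(v)\ge n-1$ for every $v\in S$. Thus $G$ is exactly $K_{k+1}$ completely joined to an independent set on $n-k-1$ vertices, and a direct triangle count produces precisely $\binom{k+1}{2}(n-k-1)+\binom{k+1}{3}$ triangles, matching the target.

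Finally I would establish strict inequality in the remaining case $s:=|S|\ge k+2$ by splitting on $s$. When $s\ge 4(k+1)$, substituting the bound $2|E(G)|\ge s(n+k)/2+(k+1)(n-s)$ into $3t_3(G)\ge k|E(G)|$ suffices after elementary algebra to beat the target. When $k+2\le s<4(k+1)$, the hypothesis $n\ge 6(k+1)(k+2)$ guarantees that $s<(n+k)/2$, which in turn implies that no $v\in S$ can have all its neighbours in $S$ (otherwise $d(v)\le s-1<(n+k)/2$); hence every $v\in S$ has a neighbour in $T$ and so $d(v)\ge n+k-s$. This strengthens the basic bound $d(x)+d(y)-n\ge k$ to $d(x)+d(y)-n\ge n+2k-2s$ for edges inside $S$, and plugging this into
\[
3t_3(G)\ \ge\ (n+2k-2s)\,|E(S)|\ +\ k\,|E(S,T)|
\]
together with $|E(S,T)|\ge(k+1)(n-s)$ and $|E(S)|\ge sk/2$ (derived from $\sum_{v\in S}d(v)\ge s(n+k-s)$ and $|E(S,T)|\le s(n-s)$) reduces the target inequality to the factored form $n(s-k-1)\ge 2(s-k-1)(s+k+2)$. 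Cancelling the positive factor $s-k-1$ leaves $n\ge 2(s+k+2)$, which is amply implied by $n\ge 6(k+1)(k+2)$ throughout this range. The principal obstacle I anticipate is this small-$s$ regime, where the bound is nearly tight and closing it requires both the improved common-neighbour estimate inside $S$ and the $(k+1)$-lower bound on degrees in $T$; the appearance of the factor $(s-k-1)$ in the reduced inequality is exactly what makes equality possible only at $s=k+1$.
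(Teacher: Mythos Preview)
Your proof is correct and reaches the target via a route that is recognizably related to, but genuinely different from, the paper's. Both arguments begin with the partition $V=S\cup T$ into high- and low-degree vertices, the observation that $T$ is independent, and the basic bound $3\,t_3(G)\ge k|E(G)|$. From there they diverge.

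The paper splits on the minimum degree $\delta$. For $\delta\ge k+2$ it invokes a separately proved lemma $t_3(G)\ge \tfrac{k\delta}{2}\,|T|$ to handle the regime where $|T|$ is large, and falls back on the crude edge estimate $|E(G)|\ge\frac{n(n+k)}{4(k+2)}$ when $|T|$ is small; finally, for $\delta=k+1$ it simply observes that the extremal configuration must occur as a subgraph. Your argument instead splits on $s=|S|$: the case $s=k+1$ forces $G$ to coincide with the extremal graph; for $s\ge k+2$ you strengthen the common-neighbour estimate on edges inside $S$ to $n+2k-2s$ (via the fact that every vertex of $S$ sees $T$ when $s$ is small), and then reduce everything to the pleasant factorisation $(s-k-1)\bigl[n-2(s+k+2)\bigr]\ge 0$.

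What each approach buys: the paper's lemma $t_3\ge k\delta m/2$ is a clean standalone statement that isolates the role of the low-degree vertices, but the subsequent case split on $m$ is somewhat ad hoc and the threshold $n\ge 6(k+1)(k+2)$ enters only through a loose edge count. Your route avoids a separate lemma and makes the near-tightness transparent: the factor $(s-k-1)$ explains exactly why equality is possible only at $s=k+1$, and the hypothesis on $n$ is used precisely where needed, namely to ensure $n\ge 2(s+k+2)$ in the small-$s$ range. One small remark: in your large-$s$ sub-case the inequality you obtain at the boundary $s=4(k+1)$ is an equality rather than strict, so ``beat'' should be ``meet''; this is harmless since the theorem only asks for $\ge$.
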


To prove this theorem, we will need the following lemma:

\begin{lemma}
Suppose the number of vertices with degree less than $\frac{n+k}{2}$ is $m$ and the minimal degree in the graph is $\delta$. Then the graph contains at least $\frac{k\delta}{2}m$ triangles.
\end{lemma}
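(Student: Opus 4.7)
The plan is to exploit the degree-sum hypothesis via inclusion--exclusion on neighborhoods and then avoid double-counting by observing that a triangle can contain at most one low-degree vertex. First I would prove the auxiliary claim that every edge of $G$ lies in at least $k$ triangles: for any edge $xy \in E(G)$, since $N(x) \cup N(y) \subseteq V(G)$ we have $|N(x) \cap N(y)| = d(x) + d(y) - |N(x) \cup N(y)| \geq (n+k) - n = k$, and each common neighbor yields a distinct triangle on the edge $xy$.

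Next, let $L$ be the set of vertices of degree strictly less than $\frac{n+k}{2}$, so $|L|=m$. The crucial structural observation is that $L$ is an independent set: if $v \in L$ and $u \in N(v)$, then $d(u) \geq (n+k) - d(v) > \frac{n+k}{2}$, so $u \notin L$. In particular, no triangle of $G$ contains two vertices of $L$.

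Finally, I would count triangles using the vertices of $L$ as anchors. For each $v \in L$, every one of the $d(v)$ edges at $v$ lies in at least $k$ triangles by the first step, and every triangle through $v$ uses exactly two edges at $v$; hence the number of triangles containing $v$ is at least $\frac{k\,d(v)}{2} \geq \frac{k\delta}{2}$. Because no triangle has two vertices in $L$, summing this bound over $v \in L$ counts distinct triangles and produces at least $\frac{k\delta}{2}m$ triangles in total. The only genuine obstacle is ruling out over-counting when combining the per-vertex estimates, and the independence of $L$ disposes of it cleanly; the remaining steps are immediate consequences of the degree-sum inequality.
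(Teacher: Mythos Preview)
Your proof is correct and follows essentially the same route as the paper's: both establish that the low-degree set $L$ is independent, that every edge lies in at least $k$ triangles, and then double-count edge--triangle incidences along the edges incident to $L$, dividing by $2$ because each such triangle meets $L$ in exactly one vertex and hence contributes two incident edges. The only cosmetic difference is that you organize the sum by vertex $v\in L$ whereas the paper organizes it by edges running between $L$ and $V\setminus L$; the underlying count is identical.
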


\begin{proof}
Let $m$ denote the number of vertices with degree less than $\frac{n+k}{2}$. First, notice that vertices with degree less than $\frac{n+k}{2}$ cannot be connected to each other and thus they form an independent set. This also means any edge incident to vertex with degree less than $\frac{n+k}{2}$ must be connected to a vertex with degree at least $\frac{n+k}{2}$. Now let us try to count the number of triangles sitting ``in between" vertices with small degree and vertices with large degree, and temporarily neglect the triangles whose three vertices are the vertices with large degree. Here, ``large degree" means vertices $v$ such that $d_G(v) \geq \frac{n+k}{2}$ while ``small degree" means $d_G(v) < \frac{n+k}{2}$.

Let us consider the sum $\displaystyle \sum_{e} (\text{\# triangles containing }e)$ for all $e$ such that one of the endpoint has ``small" degree and one of the endpoint has ``large" degree. Since we know the vertices with small degree form an independent set, a triangle can only contain three vertices of large degree or two vertices of large degree and a vertex of small degree. Triangles containing three vertices of large degree will not have any contribution to the sum, while triangles
such that two of the vertices have large degree and one of the vertices has small degree will contribute 2 to the sum since two of the triangle's edges are between vertices with small degree and vertices with large degree. Thus, we conclude the sum is essentially twice the number of triangles sitting in between vertices of large degree and vertices of small degree, and thus we know that $\displaystyle t_3(G) \geq \frac{1}{2}\sum_{e}(\text{\# triangles containing }e)$ where $t_3(G)$ is the number of triangles in the graph and the sum is over all edges in between vertices of small degree and vertices of large degree. By the adjacent vertex condition, the number of triangles containing $e$ for an arbitrary edge $e \in G$ is $k$. Since the minimum degree is $\delta$ and edges from the vertices with small degree shall go to vertices with large degree, we know that the number of edges such that one endpoint is a vertex with small degree and one vertex is a vertex with large degree is at least $m \delta$.

Therefore, $t_3(G) \geq \frac{m\delta}{2}k$, as desired.
\end{proof}

With the above lemma, we are able to show if the number of vertices is sufficiently large, $G$ is connected, and for any $xy \in E(G)$, $d_G(x) + d_G(y) \geq n +k$, then $G$ contains at least $\binom{k+1}{2}(n-k-1) + \binom{k+1}{3}$ triangles.

\begin{proof}[of Theorem \ref{thm:adjacent}]
The optimal graph is constructed as follows: we construct a clique of $k+1$ vertices and the rest $n-k-1$ vertices are connected to the vertices in the clique. In this way, the $k+1$ vertices in the clique have degree $n-1$ and the other $n-k-1$ vertices have degree $k+1$.

The first step is to show we cannot achieve a lower number of triangles with a graph with minimum degree $\delta \geq k + 2$ by a case analysis on $m$, the number of vertices with degree less than $\frac{n+k}{2}$.

The first case is when many of vertices of $G$ have degree less than $\frac{n+k}{2}$. By the above lemma, if $m \geq \frac{k+1}{k+2}n$ and $\delta \geq k+2$, then $t_3(G) \geq \binom{k+1}{2}n$ because  
\begin{equation}
  t_3(G) \geq \frac{m\delta}{2}k \geq \frac{1}{2}\left( \frac{k+1}{k+2}n \right)(k+2)k = \frac{(k+1)k}{2}n = \binom{k+1}{2}n.  
\end{equation}
The optimal graph we constructed has 
\begin{equation}
    \binom{k+1}{2}(n-k-1) + \binom{k+1}{3} = \binom{k+1}{2}n - \left[ \binom{k+1}{2}(k+1) - \binom{k+1}{3} \right]
\end{equation} triangles. However, 
\begin{equation}
    \binom{k+1}{2}(k+1) - \binom{k+1}{3} \geq 0
\end{equation}
and thus 
\begin{equation}
    \binom{k+1}{2}n \geq \binom{k+1}{2}(n-k-1) + \binom{k+1}{3},
\end{equation}
because we know that 
\begin{math}
    \binom{k+1}{2}(k+1) = \frac{(k+1)^2k}{2}
\end{math} while 
\begin{math}
    \binom{k+1}{3} = \frac{(k+1)k(k-1)}{6}.
\end{math}
Since we have $k \geq 1$, we know that $(k+1)^2k$ and $(k+1)k(k-1)$ are all nonnegative and by factor-by-factor comparison we see 
\begin{equation}
    (k+1)^2k \geq (k+1)k(k-1).
\end{equation}
Since $2 \leq 6$, we see $\frac{(k+1)k^2}{2} \geq \frac{(k+1)k(k-1)}{6}$ and thus 
\begin{equation}
    \binom{k+1}{2}(k+1) - \binom{k+1}{3} \geq 0.
\end{equation}
As a result, we know that if $m \geq \frac{k+1}{k+2}{n}$ and $\delta \geq k+2$ then $t_3(G)$ cannot be less than the optimal graph we have constructed.

To solve the case when $m < \frac{k+1}{k+2}{n}$, notice that in this case $n - m$ is large and thus there are a lot of large-degree vertices. We may use a rough estimate based on the number of edges in the graph, assuming that $n$ is large compared to $k$. When $m < \frac{k+1}{k+2}{n}$, we have $n - m > \frac{1}{k+2}{n}$. Now the degree sum of $G$ will be at least $(n-m)\frac{n+k}{2} > \frac{n(n+k)}{2(k+2)}$. Therefore, the number of edges is greater than $\frac{n(n+k)}{4(k+2)}$, and thus we know the number of triangles is at least 
\begin{equation}
\frac{k}{3} \frac{n(n+k)}{4(k+2)} = \frac{k}{12(k+2)}\left[ n(n+k) \right].
\end{equation}
In particular, suppose $n \geq 6(k+1)(k+2)$, we have 
\begin{equation}
\begin{aligned}
\frac{k}{12(k+2)}n^2
&\geq
\frac{k}{12(k+2)} [6(k+1)(k+2) n] \\
&= \frac{k(k+1)}{2} n \\ 
&= \binom{k+1}{2}n \\
&\geq \binom{k+1}{2}(n-k - 1) + \binom{k+1}{3}
\end{aligned}
\end{equation}
and thus 
\begin{equation}
    \frac{k}{12(k+2)}[n(n+k)] \geq \binom{k+1}{2}(n-k-1) + \binom{k+1}{3}.
\end{equation}

Therefore, based on the arguments above, we can show when $n \geq 6(k+1)(k+2)$ and when $\delta \geq k+2$,
\begin{equation}
    t_3(G) \geq \binom{k+1}{2}n  \geq \binom{k+1}{2}(n-k-1)+\binom{k+1}{3}.
\end{equation}
Suppose the number of vertices with degree less than $\frac{n+k}{2}$ is $m$, then if $m \geq \frac{k+1}{k+2}n$ then we conclude that the number of triangles is at least $\binom{k+1}{2}n$, based on the argument in the first half. If on the other hand, $m < \frac{k+1}{k+2}n$, we still know that the number of triangles is at least $\binom{k+1}{2}n$, based on the argument in the second half. Thus, the extremal graph cannot have a minimum degree $\delta \geq k+2$. Assuming that our graph is connected (so we do not have vertices with degree 0), and since if a vertex is not an isolated vertex its degree must be at least $k+1$ (otherwise its neighbor will have degree at least $n$, which cannot happen), we conclude the optimal graph has minimum degree $k+1$.

Finally, suppose our graph $G$ has minimum degree $k+1$, and we are going to show $G$ must contain at least $\binom{k+1}{2}(n-k-1)+ \binom{k+1}{3}$ triangles. Let $x$ be a vertex with degree $k+1$ in $G$. By the adjacent vertex condition, the $k+1$ neighbors of $x$ must have degree $n-1$, and thus $k+1$ neighbors of $x$ are connected to all other vertices in the graph. Therefore, if $G$ has minimum degree $k+1$, the optimal graph we have constructed must be a subgraph of $G$. Since adding more edges will not decrease the number of triangles, we conclude $G$ must contain at least $\binom{k+1}{2}(n-k-1)+ \binom{k+1}{3}$ triangles in this case.

\end{proof}

In fact, the extremal construction above also minimizes the number of edges, assuming that there are no isolated vertex in the graph and the number of vertices are sufficiently large:

\begin{theorem}
Suppose $G$ is a connected graph with $n \geq 2k+2$ vertices, and suppose for all $xy \in E(G)$, $d_G(x) + d_G(y) \geq n + k$. Then $G$ contains at least $(k+1)n - \binom{k+2}{2}$ edges.
\end{theorem}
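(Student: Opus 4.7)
The plan is to case-split on the minimum degree $\delta = \delta(G)$. I would first observe that the adjacent-vertex condition forces $\delta \geq k+1$: for a min-degree vertex $v$ and any neighbor $u$, one has $d(u) \geq n+k-\delta$, and combining this with $d(u) \leq n-1$ gives $\delta \geq k+1$.

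In the clean case $\delta = k+1$, I would note that each of the $k+1$ neighbors of $v$ has degree exactly $n-1$, hence is universal. Counting only the edges incident to these $k+1$ universal vertices ($\binom{k+1}{2}$ among themselves and $(k+1)(n-k-1)$ to the remaining vertices) already yields
\begin{equation*}
|E(G)| \geq \binom{k+1}{2} + (k+1)(n-k-1) = (k+1)n - \binom{k+2}{2},
\end{equation*}
which is the desired bound, with equality precisely when the non-universal vertices induce no further edges (recovering the construction of Theorem~\ref{thm:adjacent}).

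The substantive case is $\delta \geq k+2$, and this is where the main obstacle lies. Fixing a min-degree vertex $v$ and using $d(u) \geq n+k-\delta$ for the $\delta$ neighbors of $v$ together with $d(u) \geq \delta$ for the remaining $n-\delta$ vertices will give
\begin{equation*}
2|E(G)| \geq \delta(2n+k-2\delta),
\end{equation*}
alongside the trivial bound $2|E(G)| \geq n\delta$. As a concave function of real $\delta$, the expression $\delta(2n+k-2\delta)$ crosses the target value $2(k+1)n - (k+1)(k+2)$ at exactly $\delta = k+1$ and $\delta = n-(k+2)/2$, and it coincides with $n\delta$ at $\delta = (n+k)/2$. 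I expect the technical heart of the proof to be verifying that these thresholds align under the hypothesis $n \geq 2k+2$: namely, $(n+k)/2 \leq n-(k+2)/2$, so that the concave bound already dominates the target on $k+2 \leq \delta \leq (n+k)/2$; and on the complementary range $\delta > (n+k)/2$ the linear bound $2|E(G)| > n(n+k)/2$ suffices, since $n(n+k)/2 - [2(k+1)n - (k+1)(k+2)] = \tfrac{1}{2}(n-k-2)(n-2k-2) \geq 0$. Both conditions reduce to $n \geq 2k+2$, which is exactly the assumed threshold.
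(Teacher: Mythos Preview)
Your proposal is correct and follows essentially the same approach as the paper: both fix a minimum-degree vertex, lower-bound the degree sum by $\delta(2n+k-2\delta)$ when $\delta\le (n+k)/2$ and by $n\delta$ otherwise, and then verify via the identity $\tfrac{1}{2}(n-k-2)(n-2k-2)\ge 0$ that both bounds dominate the target $(k+1)(2n-k-2)$ precisely when $n\ge 2k+2$. The only cosmetic difference is that you peel off $\delta=k+1$ with a direct ``universal neighbors'' argument, whereas the paper absorbs it into the concave analysis as the left endpoint of $[k+1,(n+k)/2]$; your version has the minor bonus of exhibiting the equality case explicitly.
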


\begin{proof}
Let the vertex with minimum degree in $G$ be $u$, and assume that its degree is $d_G(u) = \delta$. Since $G$ is connected, $\delta > 0$. As a result, by our argument above, the adjacent vertex condition requires $\delta \geq k + 1$. Now it follows the $\delta$ vertices that are connected with $u$ must have degree at least $\max \{n+k-\delta, \delta\}$, and the rest $n-\delta$ vertices clearly have degrees at least $\delta$. Therefore, the sum of degrees is at least $f(\delta) = \delta \max\{n+k-\delta, \delta\} + (n-\delta)\delta$.

If $\delta \leq \frac{n+k}{2}$, we have $\max\{n+k - \delta,\delta\} = n+k-\delta$, and thus 
\begin{equation}
    f(\delta) = \delta(n+k-\delta) + (n-\delta)\delta = \delta(k+2n-2\delta).
\end{equation}
Since the coefficient for the quadratic term in $f(\delta)$ is negative, the minimum of $f(\delta)$ over interval $[k + 1, \frac{n+k}{2}]$ can only be taken at either $k+1$ or $\frac{n+k}{2}$. Since \begin{equation}
    f(\frac{n+k}{2}) - f(k+1) = \frac{1}{2}(n-k-2)(n-2k-2),
\end{equation}
we know assuming that $n \geq 2k+2$, $f(\frac{n+k}{2}) - f(k+1) \geq 0$ and therefore $\delta = k+1$ minimizes $f(\delta)$. In this case, 
\begin{equation}
f_{\min}(\delta) = (2n-k-2)(k+1).
\end{equation}

If $\delta > \frac{n+k}{2}$, then we have $f(\delta) = n\delta > n\frac{n+k}{2}$. Since 
\begin{equation}
    \frac{n+k}{2} - (2n-k-2)(k+1) = \frac{1}{2}(n-k-2)(n-2k-2) \geq 0,
\end{equation}
when $n \geq 2k+2$, we conclude that $f$ is minimized at $\delta = k+1$, and the degree sum is at least $(2n-k-2)(k+1)$. Therefore, the number of edges in $G$ is at least 
\begin{equation}
    \frac{(2n-k-2)(k+1)}{2} = (k+1)n - \binom{k+2}{2}.
\end{equation}

However, the extremal graph constructed above has \begin{equation}
    \binom{k+1}{2} + (n-k-1)(k+1) = (k+1)n - \binom{k+2}{2}
\end{equation}
edges, so $ (k+1)n - \binom{k+2}{2}$ edges can be achieved.
\end{proof}

This result should not be too surprising: the adjacent vertex condition essentially guarantees that there are at least $k$ triangles containing a given edge, and so to minimize the number of triangles, we might want to minimize the number of edges first.

\section{Non-adjacent vertices condition}
A even more interesting question is the minimum number of edges or triangles in a graph such that two \emph{non-adjacent} vertices have large degree sum. One classical result with this type of degree sum condition is the Ore's theorem, which states that if a graph $G$ satisfies $d_G(x) + d_G(y) \geq n$ for all $xy \notin E(G)$, $x \neq y$, then the graph $G$ is Hamiltonian. Also, conditions of the form $d_G(x) + d_G(y) \geq k$ for all $xy \notin E(G), x \neq y$ mean the complement of the graph does not contain big double stars. 

The particular non-adjacent vertex condition we will study is the condition that $d_G(x) + d_G(y) \geq n+1$ for all $xy \notin E(g), x \neq y$. As we are going to show next, the minimum number of triangles in the graph will depend on the parity of the number of vertices. To prove the lower bound on the number of triangles for a graph satisfying the non-adjacent vertex condition, we will first establish tight lower bounds on the number of edges.

\subsection{Number of Edges}
We have the following result on the minimum number of edges a graph satisfying the non-adjacent vertex condition must contain:

\begin{theorem} For a graph $G$ of order $n \geq 3$ that satisfies the condition $d_G(x) + d_G(y) \geq n+1$ for all $xy \notin E(G)$, $x \neq y$, the minimum number of edges in $G$ are:
\begin{equation}
    |E(G)| \geq \left\{
                \begin{array}{ll}
                  4k^2 - k & n =4k-1,\\
                  4k^2 + k + 1 & n = 4k,\\
                  4k^2 + 3k + 1 & n = 4k+1,\\
                  4k^2 + 5k + 2 & n = 4k+2.\\
                \end{array}
              \right.
\end{equation}

In other words,
\begin{equation}
    |E(G)| \geq \frac{n^2}{4} + \frac{n}{4} + \left\{
                \begin{array}{ll}
                  0 & (n \equiv3 \bmod 4), \\
                  \frac{1}{2} & (n\equiv 1,2 \bmod 4), \\
                  1 & (n \equiv 0 \bmod4).
                \end{array}
              \right.
\end{equation}
\end{theorem}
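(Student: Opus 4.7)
The plan is to pass to the complement graph $\bar G$. The hypothesis $d_G(x)+d_G(y)\geq n+1$ for all $xy\notin E(G)$ with $x\neq y$ is equivalent to $d_{\bar G}(x)+d_{\bar G}(y)\leq n-3$ for every edge $xy$ of $\bar G$, so minimizing $|E(G)|$ is the same as maximizing $|E(\bar G)|$ subject to this upper degree-sum condition over adjacent pairs of $\bar G$.

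To bound $|E(\bar G)|$, I would combine the identity $\sum_{xy\in E(\bar G)}\bigl(d_{\bar G}(x)+d_{\bar G}(y)\bigr)=\sum_{v\in V}d_{\bar G}(v)^2$ with the hypothesis $\sum_v d_{\bar G}(v)^2\leq (n-3)\,|E(\bar G)|$ and the Cauchy--Schwarz inequality $\sum_v d_{\bar G}(v)^2\geq (2|E(\bar G)|)^2/n$. These yield $|E(\bar G)|\leq n(n-3)/4$, whence
\[
|E(G)|\geq \binom{n}{2}-\frac{n(n-3)}{4}=\frac{n^2+n}{4}.
\]
For $n\equiv 3\pmod 4$ the right-hand side is the integer $4k^2-k$; for $n\equiv 1,2\pmod 4$ it is a half-integer and the integrality of $|E(G)|$ rounds it up to $4k^2+3k+1$ and $4k^2+5k+2$ respectively.

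The subtle case is $n=4k$, where the bound above gives only $4k^2+k$. To recover the extra $+1$, I would argue that the equality $|E(\bar G)|=n(n-3)/4$ forces both of the inequalities above to hold with equality, and in particular equality in Cauchy--Schwarz forces $\bar G$ to be regular of degree $(n-3)/2=(4k-3)/2$. Since this is not an integer, equality is impossible, so $|E(\bar G)|\leq 4k^2-3k-1$ and $|E(G)|\geq 4k^2+k+1$ as claimed.

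Finally, I would exhibit extremal constructions matching each bound. For $n=4k-1$ the complement of any $(2k-2)$-regular graph on $n$ vertices does the job (e.g.\ the circulant with jumps $1,2,\dots,k-1$, available since $2k-2$ is even). For the remaining residues one builds $\bar G$ as a near-regular graph whose degrees concentrate on $\lfloor (n-3)/2\rfloor$ and $\lceil (n-3)/2\rceil$, with the few higher-degree vertices placed as an independent set in $\bar G$ and attaching only to lower-degree vertices, so that every edge still satisfies $d_{\bar G}(x)+d_{\bar G}(y)\leq n-3$. The main obstacle, in my view, is not the Cauchy--Schwarz bound itself but the explicit bookkeeping required to verify that these near-regular complements exist and respect the degree-sum constraint in each parity class.
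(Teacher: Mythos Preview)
Your argument is correct, and it takes a genuinely different route from the paper's. The paper works directly in $G$: it fixes a vertex $v$ of minimum degree $j$, observes that the $n-j-1$ non-neighbours of $v$ each have degree at least $n+1-j$ while $v$ and its $j$ neighbours have degree at least $j$, and obtains
\[
|E(G)|\ \ge\ \frac{2j^2-(2n-1)j+n^2-1}{2},
\]
minimized at $j=(2n-1)/4$ to give $|E(G)|\ge (4n^2+4n-9)/16$. The paper then treats each residue class modulo $4$ separately, and in the classes $n\equiv 0,1,2\pmod 4$ it must rule out the boundary value of $|E(G)|$ by a structural argument (the low-degree vertices would have to form a clique of a size incompatible with their prescribed degrees).

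Your complement-plus-Cauchy--Schwarz approach is cleaner on the lower-bound side: the chain $(2m)^2/n\le\sum_v d_{\bar G}(v)^2\le (n-3)m$ gives $m\le n(n-3)/4$ in one stroke, integrality handles $n\equiv 1,2,3\pmod 4$, and the impossibility of a $\tfrac{4k-3}{2}$-regular graph dispatches $n=4k$. What the paper's approach buys, on the other hand, is that its extremal constructions are described explicitly in $G$ (complete bipartite graphs with a matching, or two cliques with a controlled set of cross-edges and deletions), so tightness is fully verified; your construction sketch in $\bar G$ is correct in spirit---near-regular with the $\lceil(n-3)/2\rceil$-degree vertices forming an independent set---but, as you note yourself, the bookkeeping (e.g.\ that for $n=4k$ one needs exactly $2k-2$ vertices of degree $2k-1$ and $2k+2$ of degree $2k-2$, and that these can actually be realised for every $k$) still has to be written out to complete the proof.
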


Thus, we know that the minimum number of edges in a graph satisfying our condition must contain $\frac{n^2}{4} + \frac{n}{4} + c$ edges where the constant $0 \leq c \leq 1$.

\begin{proof}
Let $G$ be a graph of order at least $3$ that satisfies the condition $d_G(x) + d_G(y) \geq n+1$ for all $xy \notin E(G)$ such that $x \neq y$. Consider a vertex $v$ of minimal degree $j$ in $G$. The $n-j-1$ vertices in $G$ that are not $v$ or neighbors of $v$ must have degree at least $n+1-j$. Thus,
\begin{equation}
    \begin{aligned}
          |E(G)| &\geq \frac{j(j+1) + (n-1-j)(n+1-j)}{2}\\
  &= \frac{2j^2 - (2n-1)j + n^2-1}{2}. 
    \end{aligned}
\end{equation}

When $j = \frac{2n-1}{4}$, $\frac{2j^2 - (2n-1)j + n^2-1}{2} $ attains its minimum. Thus,
\begin{equation}
\label{uniform-lower-bound}
    |E(G)| \geq \frac{1}{16}(4n^2 + 4n-9).
\end{equation}
This is a universal lower bound for the minimum number of edges in $G$. The general strategy for constructing graphs satisfying the non-adjacent vertex condition and having  minimum number of edges is to construct graphs that are close to regular graphs (\textit{i.e.} the degrees of the vertices are similar).

When the number of vertices is odd, say $n = 2k+1$ for some $k$, then we see a $(k+1)$-regular graph will satisfy the non-adjacent vertex condition as for two arbitrary vertices in the graph we have $d_G(x) + d_G(y) = 2k+2 = n+1$. We distinguish four cases on the remainder of $n$ modulo 4.\\

\begin{enumerate}[label=Case \arabic*:]
\item $n = 4k-1$ for some $k \in \naturals$.

Any $2k$-regular graph will satisfy the non-adjacent vertex condition. The graph contains 
\begin{equation}
\frac{1}{2}(2k)(4k-1) = 4k^2 - k = \frac{n^2 + n}{4}
\end{equation}
edges, and it is optimal since we have shown that $|E(G)| \geq 4k^2 - k - \frac{9}{16}$ and as $|E(G)|$ is an integer we must also have $|E(G)| \geq 4k^2 - k$. The $2k$-regular graph we are going to construct contains few triangles. We begin with the complete bipartite graph $K_{2k,(2k-1)}$. Inside the part with $2k$ vertices, we add a perfect matching.  In the resulting graph, each vertex will have degree $2k = \frac{n+1}{2}$.
\begin{figure}[h!]
    \centering

    \begin{tikzpicture}
        \draw (15,1)-- (16,1);
        \draw (17,1)-- (18,1);
        \foreach \i in {15,16,17,18}{
            \foreach \j in {15.5,16.5,17.5}{
                \draw (\i,1)-- (\j,0);
            }
        }

        \begin{scriptsize}
            \draw [fill=black] (15,1) circle (3.5pt);
            \draw [fill=black] (16,1) circle (3.5pt);
            \draw [fill=black] (17,1) circle (3.5pt);
            \draw [fill=black] (18,1) circle (3.5pt);
            \draw [fill=black] (15.5,0) circle (3.5pt);
            \draw [fill=black] (16.5,0) circle (3.5pt);
            \draw [fill=black] (17.5,0) circle (3.5pt);
        \end{scriptsize}
    \end{tikzpicture}
        \caption{This is an example of the minimal graph when $n=4 \times 2-1 = 7$}
\end{figure}
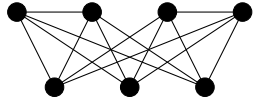

\item $n = 4k+1$ for some $k \in \naturals$.

In this case, we cannot construct a $(2k+1)$-regular graph since both $n$ and $2k+1$ are odd, but it is possible to construct a graph where all except one vertex have degree $2k+1$ and one vertex has $2k+2$. The resulting graph contains $4k^2 + 3k + 1$ edges. For two distinct vertices $x,y$ of the graph, we have $d_G(x) + d_G(y) = n + 1$ or $n+2$. Our construction begins with the complete bipartite graph $K_{2k,(2k+1)}$ and inside the part with $2k+1$ vertices, we add a semi-matching: we add a perfect matching among first $2k$ vertices and the last vertex will be connected to one of the first $2k$ vertices. Vertices in the part with $2k$ vertices will have degree $2k+1$, while the vertices in the part with $2k+1$ vertices will have degree $2k+1$ or $2k+2$. 
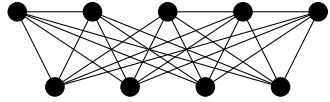
\begin{figure}[h!]
    \centering

    \begin{tikzpicture}
        \draw (14.5,1)-- (15.5,1);
        \draw (16.5,1)-- (18.5,1);
        \foreach \i in {15,16,17,18}{
            \foreach \j in {14.5,15.5,16.5,17.5,18.5}{
                \draw (\i,0)-- (\j,1);
            }
        }

        \begin{scriptsize}
            \draw [fill=black] (15,0) circle (3.5pt);
            \draw [fill=black] (16,0) circle (3.5pt);
            \draw [fill=black] (17,0) circle (3.5pt);
            \draw [fill=black] (18,0) circle (3.5pt);
            \draw [fill=black] (14.5,1) circle (3.5pt);
            \draw [fill=black] (15.5,1) circle (3.5pt);
            \draw [fill=black] (16.5,1) circle (3.5pt);
            \draw [fill=black] (17.5,1) circle (3.5pt);
            \draw [fill=black] (18.5,1) circle (3.5pt);
        \end{scriptsize}
    \end{tikzpicture}
    \caption{This is an example of the minimal graph when $n=4 \times 2+1 = 9$}
\end{figure}

To see $4k^2 + 3k + 1$ edges is optimal, notice that by Equation \ref{uniform-lower-bound},
\begin{equation}
    |E(G)| \geq \frac{1}{16}(4n^2 + 4n - 9) = 4k^2 + 3k - \frac{1}{16}
\end{equation}
automatically gives us $|E(G)| \geq 4k^2 + 3k$ since $|E(G)|$ is an integer. To see that we cannot obtain $|E(G)| = 4k^2 + 3k$, if the minimum degree is not equal to $2k$, then 
\begin{equation}
|E(G)| \geq 4k^2 + 3k + \frac{1}{2}.
\end{equation}
Even if the minimum degree is equal to $2k$, to achieve $|E(G)| = 4k^2 + 3k$, we must make sure the $2k+1$ vertices that are either the vertex with minimum degree itself or a neighbor to the vertex with minimum degree have degree $2k$ and the rest $2k$ vertices have degree $2k+2$ based on the analysis at the beginning. This is impossible: the non-adjacent vertex condition requires all vertices of degree $2k$ to be connected to each other, and thus the $2k+1$ vertices of degree $2k$ must form a clique and no other edges may be connected to those $2k+1$ vertices. However, since the rest $2k$ vertices must have degree $2k+2$, we need an edge that goes from a vertex of degree $2k+2$ to a vertex of degree $2k$, which is impossible.

\end{enumerate}

The construction is different for graphs with even number of vertices. If the number of vertices in the graph is $n  =2k$, then a $k$-regular graph will violate the non-adjacent vertex condition, while a $(k+1)$-regular graph will contain too many edges.  Vertices with degree at most $k$ must be connected to each other, so in the construction below we are going to start with a disjoint union of two cliques.

\begin{enumerate}[resume*]
\item $n = 4k+2$ for some $k \in \naturals$.

In this case, we can use the following construction: We begin with a disjoint union of cliques $K_{2k}$ and $K_{2k+2}$. Then, we are going to add and remove some edges so that eventually vertices originally in the clique $K_{2k}$ will have degree $2k+1$ and vertices originally in the clique $K_{2k+2}$ will have degree $2k+2$. The total number of edges is $4k^2 + 5k +2$, and vertices with degree $2k+1$ are connected to each other.

Let us label the vertices in $K_{2k}$  $u_1,\ldots,u_{2k}$ and the vertices inside $K_{2k+2}$ $v_1,\ldots,v_{2k+2}$. We first add some edges going between the two cliques so that vertices $u_1,\ldots,u_{2k}$ will have degree $2k+1$ in the end. For each $u_i$ ($1 \leq i \leq 2k-2)$, we connect $u_i$ to $v_j$ and $v_{j+1}$. For the last two vertices $u_{2k-1}$ and $u_{2k}$ we connect $u_i (i = 2k-1, 2k)$ with $v_i$ and $v_{i+2}$. We see now vertices $v_1,v_{2k},v_{2k+1},v_{2k+2}$ have degree $2k+2$ while vertices $v_2,\ldots,v_{2k-1}$ will have degree $2k+3$, so we are going to remove a perfect matching and reduce the degrees of $v_2,\ldots,v_{2k-1}$ by 1: we can simply remove the matching $v_2v_3, v_4v_5, \ldots, v_{2k-2}v_{2k-1}$. 

To see that we cannot do better than $4k^2 + 5k + 2$ edges, notice that Equation \ref{uniform-lower-bound} implies 
\begin{equation}
    |E(G)| \geq 4k^2 + 5k + \frac{15}{16}.
\end{equation}
Since $|E(G)|$ is always an integer, 
\begin{equation} 
|E(G)| \geq 4k^2 + 5k + 1.
\end{equation}
However, if the minimum degree is not equal to $2k+1$, $|E(G)| > 4k^2 + 5k + 1$ by the analysis at the beginning. If the minimum degree is $2k+1$, let the vertex with minimal degree be $v$, then we see we may only achieve equality when the $2k$ vertices that are neither $v$ itself or adjacent to $v$  have degree $2k+2$ and the rest $2k+2$ vertices have degree $2k+1$. This is not possible because the $2k+2$ vertices with degree $2k+1$ must form a clique and there cannot be additional edges connected to those $2k+2$ vertices, while to make sure the rest $2k$ vertices have degree $2k+2$ there must exist some edges between vertices with degree $2k+1$ and $2k+2$.

\begin{figure}[h!]
    \centering

    \begin{tikzpicture}
        \foreach \i in {14,15}{
                \draw (\i,1)-- (\i,0);
                \draw (\i,1)-- (\i+1,0);
        }
        \draw(16,1) -- (16,0);
        \draw(16,1) -- (18,0);
        \draw(17,1) -- (17,0);
        \draw(17,1) -- (19,0);

        \draw[dashed](15,0)--(16,0);
        \begin{scriptsize}
            \draw [fill=black] (14,0) circle (2pt
            )node[anchor=north]{$v_1$};
            \draw [fill=black] (15,0) circle (2pt) node[anchor=north]{$v_2$};
            \draw [fill=black] (16,0) circle (2pt)node[anchor=north]{$v_3$};
            \draw [fill=black] (17,0) circle (2pt)node[anchor=north]{$v_4$};
            \draw [fill=black] (18,0) circle (2pt)node[anchor=north]{$v_5$};
            \draw [fill=black] (19,0) circle (2pt)
            node[anchor=north]{$v_6$};
            \draw [fill=black] (14,1) circle (2pt) node[anchor=south]{$u_1$};
            \draw [fill=black] (15,1) circle (2pt)node[anchor=south]{$u_2$};
            \draw [fill=black] (16,1) circle (2pt)node[anchor=south]{$u_3$};
             \draw [fill=black] (17,1) circle (2pt)node[anchor=south]{$u_4$};
        \end{scriptsize}
    \end{tikzpicture}
    \caption{A minimal graph when $n=4 \times 2+2=10$ (the cliques are omitted)}
\end{figure}
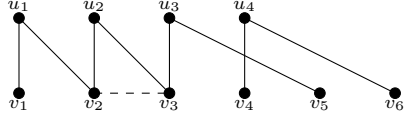

\item $n = 4k$ for some $k \in \naturals$.

In this case, we may construct a graph with $4k^2 + k + 1$ edges. We are going to construct a graph with $2k-2$ vertices having degree $2k$ and $2k+2$ vertices having degree $2k+1$, with the extra restriction that vertices with degree $2k$ are connected to each other. Thus, we can begin with a disjoint union of cliques $G_1 = K_{2k-2}$ and $G_2 = K_{2k+2}$. We will label the vertices in $G_1 = K_{2k-2}$  $u_0,u_1,\ldots,u_{2k-3}$ and vertices in $G_2 = K_{2k+2}$ $v_0,v_1,\ldots,v_{2k+1}$. We need to make sure for each each $u_i$, there are three additional edges adjacent to $u_i$ while degrees of $v_i$ stay the same. Thus, we will add some edges between vertices of $G_1$ and $G_2$ while remove certain edges belonging to $G_2$. Since the edges belong to $G_1$ are not deleted in this process, we know the $2k-2$ vertices originally belong to $G_1$ will be connected to each other, and thus the final graph will satisfy the non-adjacent vertex condition.

If $k \geq 3$,  we can proceed as follows, similar to our approach when $n = 4j + 2$ for some $j$. First, to add edges adjacent to $u_i$ where $i = 0,\ldots, 2k-3$, we will connect $u_i$ with $v_{(i+j)\mod(2k-2)}$ for $j = 0,1,2$. Notice that when $k \geq 3$ (so $2k-2 \geq 4$), $i,i+1,i+2$ are distinct modulo $2k-2$,and thus we will not introduce parallel edges. Now we know the degrees of $v_0,\ldots,v_{2k-3}$ have been increased by 3, and thus we are going to remove a 3-regular subgraph with vertices $v_0,\ldots,v_{2k-3}$. Such a 3-regular graph must exist since $2k-2 \geq 4$ and $2k-2$ is even.

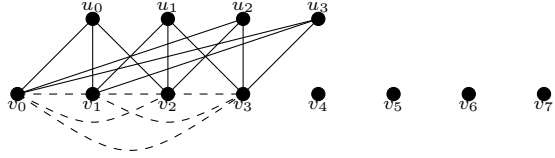
\begin{figure}[h!]
    \centering
    \begin{tikzpicture}
        \foreach \i in {14,15,16,17}{
                \draw (\i,1)-- (\i-1,0);
        }
        \foreach \i in {14,15,16}{
                \draw (\i,1)-- (\i,0);
        }
        \foreach \i in {14,15}{
                \draw (\i,1)-- (\i+1,0);
        }
        \draw(16,1) -- (13,0);
        \draw(17,1) -- (13,0);
        \draw(17,1) -- (14,0);

        \draw[dashed](13,0)--(14,0);
        \draw[dashed](14,0)--(15,0);
        \draw[dashed](15,0)--(16,0);
        \draw[dashed] (13,0) .. controls (14,-0.5) .. (15,0);
        \draw[dashed] (14,0) .. controls (15,-0.5) .. (16,0);
        \draw[dashed] (13,0) .. controls (14.5,-1) .. (16,0);

        \begin{scriptsize}
        \draw [fill=black] (13,0) circle (2.5pt
            )node[anchor=north]{$v_0$};
            \draw [fill=black] (14,0) circle (2.5pt
            )node[anchor=north]{$v_1$};
            \draw [fill=black] (15,0) circle (2.5pt)node[anchor=north]{$v_2$};
            \draw [fill=black] (16,0) circle (2.5pt)node[anchor=north]{$v_3$};
            \draw [fill=black] (17,0) circle (2.5pt)node[anchor=north]{$v_4$};
            \draw [fill=black] (18,0) circle (2.5pt)node[anchor=north]{$v_5$};
            \draw [fill=black] (19,0) circle (2.5pt)node[anchor=north]{$v_6$};
            \draw [fill=black] (20,0) circle (2.5pt)node[anchor=north]{$v_7$};
            \draw [fill=black] (14,1) circle (2.5pt) node[anchor=south]{$u_0$};
            \draw [fill=black] (15,1) circle (2.5pt)node[anchor=south]{$u_1$};
            \draw [fill=black] (16,1) circle (2.5pt)node[anchor=south]{$u_2$};
             \draw [fill=black] (17,1) circle (2.5pt)node[anchor=south]{$u_3$};
        \end{scriptsize}
    \end{tikzpicture}
    \caption{A minimal graph when $n=4\times 3 = 12$ (the cliques are omitted)}
\end{figure}

For edge cases, when $k = 1$ we may simply take the complete graph $K_4$, which contains $6 = 4 \cdot 1^2 + 1 + 1$ edges. When $k = 2$, we may begin with the disjoint union of $K_2$ and $K_6$, then add 6 edges $u_iv_{i+j} (i = 0,1; j = 0,1,2)$. Finally, we remove a path $v_0v_1v_2v_3$. Now $u_i$ have degree 4 and all $v_j$ have degree 5. Since vertices with degree 4 are connected to each other, our graph satisfies the non-adjacent vertex condition, and the total number of edges will be $\binom{2}{2} + \binom{6}{2} + 6 - 3 = 19$.

We need to argue that $4k^2 + k + 1$ is the best we can do. The universal lower bound (Equation \ref{uniform-lower-bound}) gives us 
\begin{equation}
|E(G)| \geq 4k^2 + k - \frac{9}{16},
\end{equation}
and since $|E(G)|$ is an integer we have 
\begin{equation}
|E(G)| \geq 4k^2 + k.
\end{equation}
We are going to show $|E(G)| = 4k^2 + k$ cannot happen. First, if the minimum degree is not $2k-1$ or $2k$, we must have $|E(G)| > 4k^2 + k$. If the minimum degree is $2k-1$, the only way to achieve $|E(G)| = 4k^2 + k$ is we have $2k$ vertices of degree $2k+2$ and $2k$ vertices of degree $2k-1$. Since vertices of degree $2k-1$ need to be connected to each other, we see this scenario is impossible.

If the minimum degree is $2k$, then there can be at most $2k+1$ vertices with degree $2k$ since vertices of degree $2k$ must be connected to each other. If there are at most $2k-1$ vertices with degree $2k$, then we know $|E(G)| > 4k^2 + k$. If there are $2k$ vertices of degree $2k$, then we know the $2k$ vertices of degree $2k$ are connected to each other while there are $2k$ vertices of degree at least $2k+1$. For the $2k$ vertices of degree at least $2k+1$, each of them must be connected to at least 2 vertices of degree $2k$. For the $2k$ vertices of degree $2k$, as they are connected to each other, each vertex can only be connected to one vertex of degree at least $2k+1$. Thus, having $2k$ vertices of degree $2k$ is impossible. Similarly, we cannot have $2k+1$ vertices of degree $2k$ since those vertices must be connected to each other and cannot be connected to any other vertex.
\end{enumerate}

\end{proof}
\subsection{Number of Triangles}
In fact, for the non-adjacent vertex condition, we can still show when the number of vertices is odd, the graph minimizing the number of edges can also minimize the number of triangles.

\begin{theorem}\label{min-triangles}
Suppose $G$ is a graph with $n$ vertices, and $G$ satisfies for any pair of distinct vertices $x \neq y$, $xy \notin E(G)$, $d_G(x) + d_G(y) \geq n +1$. 
\begin{enumerate}
    \item If $n = 4k-1$ where $k \in \naturals$, $G$ contains at least $k(2k-1) = \frac{n^2-1}{8}$ triangles.
    
    \item If $n = 4k+1$ where $k \in \naturals$, $G$ contains at least $2k(k+1) = \frac{n^2+2n-3}{8}$ triangles.
\end{enumerate}
\end{theorem}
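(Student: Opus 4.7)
The starting point is the identity
\[
3\,t_3(G) \;=\; \sum_{v\in V}d_G(v)^2 \;-\; n\,|E(G)| \;+\; \sum_{w\in V}e_G\!\left(\overline{N}(w)\right),
\]
where $\overline{N}(w) = V\setminus(N_G(w)\cup\{w\})$. This follows from the formula $|N(u)\cap N(v)| = d(u)+d(v)-n+|\overline{N}(u)\cap\overline{N}(v)|$ for each edge $uv\in E(G)$ (obtained by noting that $u,v\in N(u)\cup N(v)$ and applying inclusion-exclusion) together with the double-counting identity $\sum_{uv\in E}|\overline{N}(u)\cap\overline{N}(v)| = \sum_w e_G(\overline{N}(w))$.

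The plan is to lower-bound each of the three terms. For the first two, Cauchy--Schwarz gives $\sum d_G(v)^2 \ge (2|E(G)|)^2/n$, so combined with the edge-count lower bound of the previous theorem ($|E(G)|\ge 4k^2-k$ when $n=4k-1$, and $|E(G)|\ge 4k^2+3k+1$ when $n=4k+1$) we obtain $\sum d_G(v)^2 - n|E(G)| \ge k(4k-1)$ in the first case and an analogous estimate in the second (after an integrality correction using the explicit degree sequence since no regular graph exists for $n=4k+1$). These contributions account for roughly two-thirds of the desired $3\,t_3(G) \ge 3k(2k-1)$ (respectively $6k(k+1)$), leaving a residual gap of $2k(k-1)$ (respectively $2k^2+k-3$) that must be supplied by $\sum_w e_G(\overline{N}(w))$.

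For this last term, I would proceed by case analysis on the minimum degree $\delta$. Examining the proof of the edge theorem shows that $\delta$ must equal $\lfloor(n-1)/2\rfloor$ or $\lceil(n-1)/2\rceil$. Two observations are then key: (i) every $v\in\overline{N}(w)$ satisfies $d_G(v) \ge n+1-d_G(w)$ by applying the non-adjacent-vertex condition to the pair $vw$, yielding the local bound $|N_G(v)\cap\overline{N}(w)| \ge n+1-2d(w)$ and hence a nontrivial lower bound on $e_G(\overline{N}(w))$ whenever $d(w)\le(n-1)/2$; (ii) every non-edge inside $\overline{N}(w)$ has at least three common neighbors, and more globally, every two vertices of degree $\le(n-1)/2$ must be adjacent, so the low-degree vertices form a clique. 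Combining (i) with the forced clique structure from (ii), one can push the bound through for each of the two possible values of $\delta$ and obtain the target inequality.

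The main obstacle will be the regime $d(w) = \lceil(n+1)/2\rceil$, where the local bound from (i) becomes trivial since $n+1-2d(w)\le 0$; this is precisely the regime of the extremal construction, in which the matching edges concentrate all the codegree surplus (each contributing $\lceil(n-1)/2\rceil$ triangles) while the many bipartite edges contribute only one triangle apiece. Handling this degenerate case requires a careful global argument, exploiting that edges whose endpoints share many non-neighbors must exist in any graph meeting the edge lower bound, in order to account for the slack between the Cauchy--Schwarz estimate and the exact triangle count.
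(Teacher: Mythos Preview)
The paper's proof is a two-line application of the Lov\'asz--Simonovits supersaturation bound (stated as Fact~1): any graph on $n$ vertices with $\lfloor n^2/4\rfloor + t$ edges and $t<n/2$ has at least $t\lfloor n/2\rfloor$ triangles. Plugging in the edge lower bounds already proved, one gets $t=k$ for $n=4k-1$ and $t=k+1$ for $n=4k+1$, and the triangle counts $k(2k-1)$ and $2k(k+1)$ drop out immediately. No case analysis, no degree-sequence arguments.

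Your identity $3t_3(G)=\sum_v d(v)^2 - n|E| + \sum_w e_G(\overline N(w))$ is correct, and the Cauchy--Schwarz step handles the first two terms as you say. But the proposal has a genuine gap at the third term. Both of your tools become vacuous precisely in the extremal regime: for $n=4k-1$ the extremal graph is $2k$-regular, so $n+1-2d(w)\le 0$ for every $w$ (killing observation~(i)), and there are no vertices of degree $\le (n-1)/2$ at all (killing observation~(ii)). This is not a boundary case but the main case, and you are left needing $\sum_w e_G(\overline N(w))\ge 2k(k-1)$ with no leverage from the hypothesis. Your closing sentence (``a careful global argument, exploiting that edges whose endpoints share many non-neighbors must exist'') is a hope rather than an argument; making it rigorous is essentially reproving the supersaturation inequality you would be better off citing. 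In short, the plan reduces the problem to its hardest component and then stops.
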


To prove this, we need the following fact:
\begin{fact}[\cite{lovaszsimon}]
Any graph with $n$ vertices and $\lfloor n^2 / 4 \rfloor + k$ edges contains at least $k\lfloor n/2 \rfloor$ triangles assuming that $k < n/2$.
\end{fact}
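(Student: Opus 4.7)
The plan is to combine the tight edge lower bounds established in the previous subsection with the Lovász--Simon fact that is stated immediately before the theorem. Both results, taken together, do essentially all of the work.

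First, I would invoke the edge-count theorem proved earlier in this section. For $n = 4k-1$ it gives $|E(G)| \geq 4k^2 - k$, and for $n = 4k+1$ it gives $|E(G)| \geq 4k^2 + 3k + 1$. The next step is a purely arithmetic comparison with $\lfloor n^2/4 \rfloor$. When $n = 4k-1$,
\begin{equation}
\left\lfloor \frac{n^2}{4} \right\rfloor = \left\lfloor \frac{16k^2 - 8k + 1}{4} \right\rfloor = 4k^2 - 2k,
\end{equation}
so the edge bound can be rewritten as $|E(G)| \geq \lfloor n^2/4 \rfloor + k$. Similarly, when $n = 4k+1$,
\begin{equation}
\left\lfloor \frac{n^2}{4} \right\rfloor = \left\lfloor \frac{16k^2 + 8k + 1}{4} \right\rfloor = 4k^2 + 2k,
\end{equation}
so the edge bound becomes $|E(G)| \geq \lfloor n^2/4 \rfloor + (k+1)$.

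Now I would apply the Lovász--Simon fact with excess parameter $k$ in the first case and $k+1$ in the second. In both cases I need to verify the hypothesis that the excess is strictly less than $n/2$: for $n = 4k-1$ this is $k < (4k-1)/2$, equivalent to $k \geq 1$; for $n = 4k+1$ it is $k+1 < (4k+1)/2$, equivalent to $k \geq 1$. Both hold for all $k \in \mathbb{N}$. Applying the fact therefore yields at least $k \lfloor n/2 \rfloor = k(2k-1) = \frac{n^2-1}{8}$ triangles when $n = 4k-1$, and at least $(k+1) \lfloor n/2 \rfloor = 2k(k+1) = \frac{n^2 + 2n - 3}{8}$ triangles when $n = 4k+1$, matching the two parts of the theorem exactly.

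The argument is short because the serious work was done in two places: the matching-sharp edge count for graphs satisfying the non-adjacent vertex condition, and the Lovász--Simon inequality. The only mild obstacle is bookkeeping the floor values of $n^2/4$ to confirm that the previously established edge bounds exceed $\lfloor n^2/4 \rfloor$ by exactly the right integer excess to produce the claimed triangle counts; I would also remark briefly that the construction used to realize the edge lower bound in each odd case actually achieves the triangle bound, showing that the two constants $k(2k-1)$ and $2k(k+1)$ cannot be improved.
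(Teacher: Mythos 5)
The statement you were asked to prove is the Lov\'asz--Simonovits fact itself: that a graph on $n$ vertices with $\lfloor n^2/4\rfloor + k$ edges contains at least $k\lfloor n/2\rfloor$ triangles whenever $k < n/2$. The paper does not prove this --- it imports it by citation as a known classical result (the resolution of the Erd\H{o}s--Rademacher problem in this range of $k$). Your argument does not prove it either: you take the Lov\'asz--Simonovits inequality as given and combine it with the edge lower bounds of the previous subsection to deduce the triangle counts $k(2k-1)$ and $2k(k+1)$ for $n = 4k-1$ and $n = 4k+1$. That is a proof of Theorem~\ref{min-triangles}, not of the Fact; read as a proof of the Fact, it is circular, since the very inequality to be established is invoked as a black box in the middle of the argument.

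For what it is worth, as a proof of Theorem~\ref{min-triangles} your write-up is essentially identical to the paper's: the same appeal to the edge bounds, the same floor computations $\lfloor(4k-1)^2/4\rfloor = 4k^2-2k$ and $\lfloor(4k+1)^2/4\rfloor = 4k^2+2k$ giving excesses $k$ and $k+1$, the same verification that these excesses are below $n/2$, and the same closing remark that the edge-minimizing constructions attain the triangle bounds. But an actual proof of the Fact would require a self-contained argument about triangle counts in graphs just above the Tur\'an threshold --- for instance, starting from the observation that every edge $uv$ lies in at least $d(u)+d(v)-n$ triangles and then carrying out the Lov\'asz--Simonovits weighting or an induction on the excess $k$, in the spirit of Rademacher's argument for $k=1$. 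None of that machinery appears in your proposal, so the target statement remains unproved.
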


\begin{proof}[of Theorem \ref{min-triangles}]
As we have shown before, when $G$ contains $n = 4k-1$ vertices, $G$ must contain at least $4k^2 - k = \floor{\frac{(4k-1)^2}{4}} + k$ edges. Thus we know $G$ must contain at least 
\begin{equation}
    k\lfloor \frac{4k-1}{2} \rfloor = k(2k-1) = \frac{n^2-1}{8}
\end{equation}
triangles, and the construction that achieves $4k^2 - k$ edges above has $k(2k-1)$ triangles.

Similarly, if $G$ contains $n = 4k+1$ vertices, $G$ must contain at least 
\begin{math}
    4k^2 + 3k + 1 = \lfloor{\frac{(4k+1)^2}{4}}\rfloor + k+1
\end{math}
edges. Thus, $G$ must contain at least 
\begin{equation} 
(k+1)\lfloor \frac{4k+1}{2} \rfloor = 2k(k+1)
\end{equation}
triangles, and the construction we have above that achieves $4k^2 + 3k + 1$ edges has $2k(k+1)$  triangles.
\end{proof}

On the other hand, we want to show that if $G$ contains even number of vertices, the least number of triangles $G$ contains is closer to $\frac{n^2}{4}$:

\begin{theorem}
Suppose $G$ is a graph with $n = 2k$ vertices satisfying the non-adjacent vertex condition. Let $t_3^{\min} (n)$ denote the minimum possible number of triangles in $G$. Then 
\begin{equation}
     \lim_{n \rightarrow \infty}\frac{t_3^{\min}(n)}{n^2} = \frac{1}{4}.
\end{equation}
\end{theorem}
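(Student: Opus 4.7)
The plan is to establish both directions of the limit: an upper bound via an explicit construction and a matching asymptotic lower bound.

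For the upper bound $t_3^{\min}(n) \leq n^2/4 + O(n)$, for $n = 2k$ I would take the complete bipartite graph $K_{k,k}$ between parts $A$ and $B$ of size $k$ and add a perfect matching inside each part (when $k$ is odd, a near-perfect matching plus one extra edge, or the variant $K_{k+1,k-1} + C_{k+1}$). The graph is $(k+1)$-regular, so any two non-adjacent vertices lie in the same part and have degree sum $2k+2 \geq n+1$, verifying the condition. Every triangle uses exactly one matching edge together with a vertex of the opposite part, yielding $(k/2)\cdot k + (k/2)\cdot k = k^2 = n^2/4$ triangles.

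For the lower bound $t_3^{\min}(n) \geq n^2/4 - o(n^2)$, direct application of the Lovász-Simonovits Fact to the edge bound $|E(G)| \geq n^2/4 + n/4 + O(1)$ established earlier only yields $t_3(G) \geq (n/4)(n/2) = n^2/8 + O(n)$, off by a factor of two. The plan is to recover this factor by exploiting that the condition forces internal edges on \emph{both} sides of any balanced partition. Fix $V(G) = A \cup B$ with $|A|=|B|=n/2$. Two vertices $u,v \in A$ that are both internally isolated (no neighbor in $A$) would be non-adjacent in $G$ with $d(u) + d(v) \leq 2|B| = n < n+1$, violating the condition; hence at most one vertex of each side is internally isolated, forcing $e_A + e_B \geq n/2 - O(1)$ internal edges.

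Each internal edge $uv \in A$ satisfies $|N(u) \cap N(v) \cap B| \geq d_B(u) + d_B(v) - |B|$, so to close the argument I would show that $G$ is close to the balanced complete bipartite graph, which forces $d_B(u) \approx |B| = n/2$ for almost all $u$ and gives $\sim n/2$ triangles of type $AAB$ per internal edge (and symmetrically for $B$); summing then yields $t_3(G) \gtrsim 2 \cdot (n/4)(n/2) = n^2/4$. The closeness to $K_{n/2,n/2}$ I would establish by splitting on the minimum degree $\delta$: when $\delta \geq n/2+1$ the graph already has $|E(G)| \geq n^2/4 + n/2$, and the exact form of Lovász-Simonovits (at the boundary $k = n/2$) delivers $t_3 \geq n^2/4$ directly; when $\delta \leq n/2$, the low-degree vertices form a clique by the condition and the non-neighbors of the minimum-degree vertex have degree at least $n/2+1$, and an Erdős-Simonovits-style stability argument on the triangle count can be used to keep $G$ within $o(n^2)$ edges of the bipartite target. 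The main obstacle lies precisely in this stability step—making the closeness-to-$K_{n/2,n/2}$ quantitative enough that $|N(u) \cap N(v) \cap B| \sim |B|$ survives for substantially all internal edges while handling the small-$\delta$ regime without losing factors in the triangle deficit.
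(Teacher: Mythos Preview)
Your upper-bound construction and your treatment of the case $\delta \geq n/2+1$ both coincide with the paper's; the gap is exactly where you locate it, in the regime $\delta \leq n/2$, but the paper closes it by a much shorter route than stability. The missing observation is that the edge bound $|E(G)| \geq n^2/4 + n/4 + O(1)$ proved earlier is only tight when roughly half the vertices have degree at most $n/2$; once the number $m$ of such vertices is small, the edge count jumps to essentially $n^2/4 + n/2$, and then Lov\'asz--Simonovits already delivers $n^2/4$ triangles. Since the $m$ low-degree vertices form a clique, either $m > (3/2)^{1/3}n^{2/3}+2$ and $\binom{m}{3} > n^2/4$ already, or $m = O(n^{2/3})$. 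In the latter case let $v$ have minimum degree $d \leq n/2$: its $n-d-1$ non-neighbors have degree at least $n+1-d$, the remaining $d+1-m$ vertices among $v$ and its neighbors have degree at least $n/2+1$, and the $m$ low-degree vertices have degree at least $d$. Summing gives a quadratic in $d$ whose unconstrained minimizer $(3n-2m-2)/4$ exceeds $n/2$ when $m = O(n^{2/3})$, so the constrained minimum occurs at $d = n/2$ and equals $n^2/2 + n - m$. Hence $|E(G)| \geq n^2/4 + n/2 - O(n^{2/3})$, and Lov\'asz--Simonovits with surplus $n/2 - O(n^{2/3})$ yields $t_3(G) \geq \bigl(n/2 - O(n^{2/3})\bigr)\lfloor n/2 \rfloor = n^2/4 - o(n^2)$.

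So the factor of two you were trying to recover structurally is already present in the edge count once you split on $m$ rather than only on $\delta$; no bipartition, no Erd\H{o}s--Simonovits stability, and no control of $|N(u)\cap N(v)\cap B|$ is needed.
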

 
\begin{proof}
First, we are going to show that $\displaystyle \lim_{n \rightarrow \infty}\frac{t^{\min}_3(n)}{n^2} \leq \frac{1}{4}$ by giving explicit constructions that have at most $\frac{n^2}{4}$ triangles. The constructed graphs are $(k+1)$-regular graphs. 

If $n$ is divisible by 4, or in other words, $n = 4l$ for some $l$, then we may begin with the complete bipartite graph $K_{2l,2l}$ and add a matching inside each of the two parts with size $2l$. In the resulting graph, each vertex has degree $2l + 1 = k + 1$, and the number of triangles is simply 
\begin{equation} 
(2l) \cdot (2l) = 4l^2 = \frac{n^2}{4}.
\end{equation}

If $n$ is divisible by 2 but not 4, or in other words, $n = 4l+2$ for some $l$, we may begin with the complete bipartite graph $K_{2l,2l+2}$ and add a cycle inside the part with $2l+2$ vertices. Again, all vertices have degree $2l+2 = k+1$, and the number of triangles is simply 
\begin{equation} 
2l(2l+2) = \frac{n^2}{4} - 1.
\end{equation}

Next, we are going to show that $\displaystyle \lim_{n \rightarrow \infty}\frac{t^{\min}_3(n)}{n^2} \geq \frac{1}{4}$ by estimating the lower bound of triangles satisfying the non-adjacent vertex condition. First, we show to minimize the number of triangles, $G$ cannot contain a significant number of vertices with degree at most $k$. By the adjacent vertex condition, vertices with degree at most $k$ must form a clique. Thus, if $G$ contains $m$ vertices with degree at most $k$, 
\begin{equation} 
t_3(G) \geq \binom{m}{3} \geq \frac{(m-2)^3}{6}.
\end{equation}

As a result, if $\frac{(m-2)^3}{6} > \frac{n^2}{4}$, or in other words, $m > (\frac{3}{2})^{1/3}n^{2/3} + 2$, we must have $t_3(G) > \frac{n^2}{4}$.

On the other hand, if $m \leq (\frac{3}{2})^{1/3}n^{2/3} + 2$, we may show there will be a lot of triangles. First, if $m = 0$, then we know the number of edges will be at least $\frac{1}{2}(k+1)n = \frac{n^2}{4} + \frac{n}{2}$. Therefore, we know the graph contains at least $\frac{n}{2} \cdot (\frac{n}{2}-1) = \frac{n^2}{4} -\frac{n}{2}$ triangles.

If $m \neq 0$, we may assume that the minimum degree in the graph is $d \leq k$. If $v$ is a vertex of minimum degree, then the $n-d-1$ vertices that are neither $v$ itself nor its neighbors have degree at least $n+1-d$. Furthermore, we know that the 
\begin{math} 
n - (n-d-1) - m = d+1 - m
\end{math}
vertices left have degree at least $k+1$ by assumption, so we know the sum of degree is at least 
\begin{equation}
\label{sum-degree-bound-even}
\begin{aligned}
   & md + (n-d-1)(n+1-d) + (d+1-m) \left( \frac{n}{2}+1 \right)\\ 
   &= d^2 + \left(1 + m - \frac{3}{2}n \right)d + \left(n^2 + \frac{1}{2}n - m - \frac{1}{2}mn \right).
\end{aligned}
\end{equation}
If we disregard the condition that $d \leq k$, then when we view it as a function in $d$, we know it is minimized at $d = \frac{3n-2m-2}{4}$. However, since $m \leq (\frac{3}{2})^{1/3}n^{2/3} + 2$, we know that $\frac{3n-2m-2}{4} > \frac{n}{2}$ if $n$ is sufficient large ($n \geq 27$), and therefore we know that Equation \ref{sum-degree-bound-even}
is minimized at $d = k = \frac{n}{2}$. Therefore, we know that the number of edges the graph contains is at least $\frac{n^2 + 2n}{2} - m$. As $m \leq (\frac{3}{2})^{1/3}n^{2/3} + 2$, we know that 
\begin{equation} 
\frac{n^2 + 2n}{2} - m \geq \frac{1}{2}n^2 + n - (\frac{3}{2})^{1/3}n^{2/3} - 2.
\end{equation}
Therefore, the graph must contain at least 
\begin{equation} 
\frac{n^2}{4} + \frac{n}{2} - (\frac{3}{16})^{1/3}n^{2/3} - 1
\end{equation}
edges. Again, by Lov\'asz-Simonovits, we conclude 
\begin{equation} 
t_3^{\min}(n) \geq \frac{n}{2}(\frac{n}{2} - (\frac{3}{16})^{1/3}n^{2/3} - 1). 
\end{equation}
Clearly, 
\begin{equation}
\displaystyle \lim_{n \rightarrow \infty}\frac{n/2(n/2 - (\frac{3}{16})^{1/3}n^{2/3} - 1)}{n^2} = \frac{1}{4},
\end{equation}
and we may now conclude that 
\begin{equation} 
\displaystyle \lim_{n \rightarrow \infty} \frac{t_3^{\min}(n)}{n^2} = \frac{1}{4}.
\end{equation}

\end{proof}

\acknowledgements

The work presented here was done as part of the Budapest Semesters in Mathematics Undergraduate Research Program under the supervision of the first author. The work of the first author was supported by the National Research, Development and Innovation Office (Hungary) under Grant K132696.

\nocite{*}
\bibliographystyle{abbrvnat}
\bibliography{ref}
\label{sec:biblio}

\end{document}